\newtheorem{thm}{Theorem}
\newtheorem{lem}{Lemma}
\newtheorem{cor}{Corollary}
\newcommand{\sabs}[1]{\left|#1\right|}
\newcommand{\sparen}[1]{\left(#1\right)}
\numberwithin{equation}{section}
\title{Isospectral Periodic Torii in Dimension 2}
\author{Alden Waters \\ CNRS Ecole Normale Superieure\\ 45 Rue d'Ulm\\ Paris, France 75005}
\begin{document}
\maketitle

\begin{abstract}
We consider two dimensional real-valued analytic potentials for the Schr\"odinger equation which are periodic over a lattice $\mathbb{L}$. Under certain assumptions on the form of the potential and the lattice $\mathbb{L}$, we can show there is a large class of analytic potentials which are Floquet rigid and dense in the set of $C^{\infty}(\mathbb{R}^2/\mathbb{L})$ potentials. The result extends the work of Eskin et. al, in "On isospectral periodic potentials in $\mathbb{R}^n$, II."\\
inverse spectral theory for Schr\"odinger operators. 35J10, 35P05, 65M32
\end{abstract}

\section{Introduction}
The subject of multi-dimensional inverse spectral theory has seen a small amount of growth in the past few decades after the work of Eskin et. al, in \cite{ert1} and \cite{ert2} in the context of Floquet rigidity. The reason for this is that it is difficult to calculate exactly the structure of spectral invariants for multi-dimensional periodic Schr\"odinger operators. The authors of \cite{ert1} and \cite{ert2} essentially are only able to consider perturbations of the zero potential in their work. The goal of this paper is to show that a larger class of analytic periodic potentials can be considered by use of the abelian functionals. Its and Mateev \cite{its} have shown that the abelian functionals categorize all finite gap potentials.  

The focus of this paper is the class of Schr\"odinger operators 
\[
P: u(x)\mapsto (-\Delta +q(x))u(x),
\]
where 
\[
\Delta=\sum\limits_{j=1}^2\frac{\partial^2}{\partial x_j^2}, 
\] 
and
 \[
 q(x):\mathbb{R}^2\rightarrow \mathbb{R}
 \]
 is a real-valued periodic potential over a lattice, $\mathbb{L}\subset\mathbb{R}^2$. In other words we have  
 \[
 q(x+d)=q(x) \qquad \forall d\in \mathbb{L}.
 \]
 We will study the question of spectral rigidity for the operator $P$ and derive results which could extend to $\mathbb{R}^n$ for $n\geq 3$. We consider the set of $\lambda$ in $\mathbb{R}$ for which the self-adjoint eigenvalue problem 
\begin{align}\label{selfadj}
&Pu(x)=\lambda u(x) \qquad u(x+d)=\exp\sparen{2\pi i k\cdot d} u(x)
\end{align}
has a solution for $k$ in $\mathbb{R}^2$ and $d$ in $\mathbb{L}$. When there is a nonzero solution to (\ref{selfadj}) we say that $\lambda$ is in Spec$_k(-\Delta +q)$. We refer to 
\[
\bigcup_{k\in \mathbb{R}}\mathrm{Spec}_k(-\Delta+q)
\]
as the Floquet spectrum.  However, when $k=0$, we simply say 'spectrum' which we denote by Spec$(-\Delta+q)$. Two potentials $q$ and $\tilde q$ are Floquet isospectral if 
\[
\mathrm{Spec}_k(-\Delta +q)=\mathrm{Spec}_k(-\Delta+\tilde{q})\qquad \forall k\in \mathbb{R}^2
\]
and isospectral if Spec$(-\Delta +q)=$Spec$(-\Delta +\tilde{q})$. Following the convention in \cite{ert2}, we consider a potential to be Floquet (spectrally) rigid if there are only a finite number of potentials modulo translations which are Floquet isospectral (resp. isospectral) to it.  

In  \cite{ert1}, Eskin et al. showed that under the assumptions
\begin{enumerate}
\item q is real analytic 
\item $\mathbb{L}$ has the property $|d|=|d'|\Rightarrow$ $d=\pm d'$ for all $d,d'$ in $\mathbb{L}$
\end{enumerate}
then Spec$(-\Delta +q)$ determines Spec$_k(-\Delta +q)$ for all $k$ in $\mathbb{R}^n$. 

 It is important to note that we are considering only lattices which satisfy a type of non-orthogonality condition. The results in \cite{ert1} and \cite{ert2} for lattices of the form $\mathbb{Z}\times\mathbb{Z}$ were examined by Gordon and Kappeler in \cite{kg1} and \cite{kg2}. When the lattice satisfies an type of non-orthogonality condition, the analysis is a bit different. We only consider potentials which break down into a finite number of one dimensional finite gap potentials. It was the author's original goal to derive spectral rigidity results when the decomposition into one dimensional potentials contained a one dimensional potential with infinitely many gaps.  The analysis here implies it would be difficult to derive spectral rigidity for such a class of potentials with the current machinery available.  We use the invariants coming from spectral asymptotics of the heat trace in any dimension. We review the one dimensional spectral theory first. The standard references for the one dimensional theory are given by \cite{winkler} and \cite{PT}. For a more modern reference reviewing the notation we refer the reader to Kappeler \cite{birk}. Koroteyv has also proved stronger characterizations of the one dimensional potentials in terms of the gap lengths of the spectra in \cite{k2}, and \cite{k1}, than the ones presented here. It would be interesting if explicitly calculable invariants two dimensional operators which did not involve decomposition to one dimensional operators existed. 

In the sequel to \cite{ert1},  \cite{ert2}, Eskin, et al., show that there is a set of analytic potentials satisfying the conditions (1) and (2) which are dense in $C^{\infty}(\mathbb{R}^2/\mathbb{L})$ such that if $q(x)$ is in this set, then $q(x)$ is Floquet rigid. Furthermore, there is a smaller, but still dense set of analytic potentials in $C^{\infty}(\mathbb{R}^2/\mathbb{L})$ such that if $q(x)$ is in this set and $\tilde{q}(x)$ is Floquet isospectral to $q(x)$ then, $\tilde q(x)=q(\pm x+a)$ where $a$ is an arbitrary constant.  Under the assumptions (1) and (2), if a potential in $\mathbb{R}^2$ is spectrally rigid (resp. unique) then it is Floquet rigid (resp unique), so their results are also true with the words "Floquet rigid" (resp. unique) replacing "isospectrally rigid" (resp unique). The main result of this paper is to show that there is a more general class of potentials which satisfy the conditions for Floquet rigidity than in \cite{ert2}. 


\section{The Isospectral Manifold in $\mathbb{R}^1$}

In $\mathbb{R}^1$ the structure of the isospectral sets of periodic potentials has been well studied and contains many results which are useful in higher dimensions.   In $\mathbb{R}^1$ the Schr\"odinger operator becomes Hill's operator. 
\begin{align*}
-\frac{d^2}{ds^2}+q(s)
\end{align*}
where $q(s)$ has period $1$ and is real-valued.  We start by assuming that $q$ is at least three times differentiable, so that we can use many of the standard results which may be found in Magnus and Winkler, \cite{winkler}. For the rest of this paper, we will also assume that $q(x)$ has mean zero. We look at the set of $\lambda$ where there is a solution to 
\begin{align}\label{onead}
&-\frac{d^2\phi(s)}{ds^2}+q(s)\phi(s)=\lambda \phi(s) \\
&\nonumber \phi(s+1)=(-1)^m\phi(s). 
\end{align}
The scalars $\lambda$ are known as the periodic and anti-periodic eigenvalues. Through curious use of notation,
the scalar, $\lambda_m^{\pm}$, denotes the eigenvalue corresponding to the eigenfunction $\phi_m^{\pm}(s+1)=(-1)^m\phi_m^{\pm}(s)$ so that 
\begin{equation}\label{spacing}
\lambda_0<\lambda_1^-\leq \lambda_1^+<\lambda_2^-\leq \lambda_2^+ . . . 
\end{equation}
 Hence the periodic spectrum consists of $\{\lambda_m^{\pm},\, m ~\mathrm{even}\}$ and the antiperiodic spectrum is $\{\lambda_m^{\pm},\, m ~\mathrm{odd}\}$. 
 
 If we change the problem (\ref{onead}) so that $\phi(s)$ obeys the boundary condition 
\[ 
\phi(0)=\phi(1)=0,
\]
 then the associated spectrum is called the Dirichlet spectrum. The Dirichlet spectrum are denoted $\mu_{m}(q)$ and they interlace the periodic and anti-periodic spectra. We will often use the fact 
 \begin{align}\label{asymptotic}
 |\lambda_m^+-\lambda_n^+|=\mathcal{O}(|m^2-n^2|),
 \end{align}
 and find it worthwhile to mention it here. Although $\lambda_m^+< \lambda_{m+1}^-$, it is possible to have $\lambda_m^-=\lambda_m^+$.  The spectrum of 
 \begin{align*}
 -\frac{d^2}{ds^2}+q(s)
 \end{align*}
 as an operator in $L^2(\mathbb{R})$ is 
 \begin{align*}
 \bigcup\limits_{m=0}^{\infty}[\lambda_m^+,\lambda_{m+1}^-]
 \end{align*} 
 Each of the intervals $[\lambda_m^+,\lambda_{m+1}^-]$ in the union above is called a "band", or interval of stability. The complement of the set of bands is union of the intervals $(\lambda_m^-, \lambda_m^+)$ which are called "gaps" or intervals of stability. In each gap, the operator 
$ -\frac{d^2}{ds^2}+q(s)$ does not have a bounded eigenfunction. A gap is referred to as open whenever $\lambda_m^-<\lambda_m^+$ and closed if $\lambda_m^-=\lambda_m^+$. The length of a gap is denoted as $\gamma_m$. 
 
 In \cite{garnett} Garnett and Trubowitz  gave a compete characterization of the gaps for $q$ in $L^2_{\mathbb{R}}[0,1]$. 
 \begin{thm}\label{gt} \cite{garnett}
 Let $\gamma_n$, $n\geq 1$, be any sequence of nonnegative numbers satisfying 
 \begin{align*}
 \sum\limits_{n\geq 1}\gamma_n^2<\infty
\end{align*}
 Then there is a way of placing the sequence of open tiles of lengths $\gamma_n$, $n\geq 1$ in order on the positive axis $(0,\infty)$ so that the complement is the set of bands for a function $q$ in $L^2_{\mathbb{R}}[0,1]$. In other words, the map
 \begin{align}\label{bianalytic}
 q\rightarrow \gamma(q)=\{\gamma_n(q)\}_{n\geq 1},
 \end{align}
 from $L^2_{\mathbb{R}}[0,1]$ to $(l^2)^+$, is onto. 
\end{thm}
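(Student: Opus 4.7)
The plan is to prove that the map $\Gamma: q \mapsto \{\gamma_n(q)\}_{n\geq 1}$ from mean-zero $L^2_{\mathbb{R}}[0,1]$ to $(\ell^2)^+$ is surjective by combining soft arguments (continuity and properness of $\Gamma$) with an explicit construction of preimages on a dense subset.

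First, I would verify that $\Gamma$ is continuous everywhere and real-analytic away from the collision set $\{\lambda_m^-=\lambda_m^+\}$, using standard Kato-type perturbation theory of the periodic and antiperiodic eigenvalues of Hill's operator. Next, I would establish properness: if $\Gamma(q_k)$ stays bounded in $(\ell^2)^+$, then $\{q_k\}$ admits a subsequence converging in $L^2$. The tool here is the family of trace identities expressing $\|q\|_{L^2}^2$ (and higher conserved densities) as sums over the full spectrum; using the forced asymptotic $\lambda_n^\pm = (\pi n)^2 + o(1)$ coming from the period-$1$ normalization together with (\ref{asymptotic}), one bounds these sums by $\sum \gamma_n^2$ up to controlled error.

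Second, I would treat the finitely supported case explicitly. Given $(\gamma_1,\ldots,\gamma_N,0,0,\ldots)$ and any placement of $N$ disjoint open tiles of the prescribed lengths on $(0,\infty)$ consistent with the required eigenvalue asymptotics, the classical Its--Matveev theta-function construction produces a real, analytic, periodic finite-gap potential whose hyperelliptic spectral curve has branch points precisely at the chosen tile endpoints. Rescaling to period $1$ and subtracting the mean yields a preimage in $L^2_{\mathbb{R}}[0,1]$ with exactly the prescribed gap sequence. For general $(\gamma_n)\in(\ell^2)^+$, truncate at level $N$ to obtain finite-gap potentials $q^{(N)}$, and extract an $L^2$-limit $q$ via the properness bound; continuity of $\Gamma$ then gives $\Gamma(q)=(\gamma_n)$.

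The principal obstacle, as I see it, lies in the properness step: the trace identities control $\|q\|_{L^2}$ in terms of both the gap lengths and the band-edge positions, and different admissible placements of the tiles on $(0,\infty)$ a priori produce potentials of different $L^2$ norms. One must either pin down a canonical placement compatible with mean zero and period $1$, or else use a trace formula weighted so as to isolate the gap contribution cleanly from the band-edge contribution. A secondary but nontrivial point is verifying that, for arbitrary branch points on the real axis, the Its--Matveev theta-function construction does yield a \emph{real}, smooth, strictly period-$1$ potential, which requires careful normalization of the holomorphic differentials and the Abel map on the spectral curve.
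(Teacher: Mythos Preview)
The paper does not prove this theorem; it merely quotes it from Garnett--Trubowitz \cite{garnett} as background. There is therefore no ``paper's own proof'' to compare your proposal against.

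That said, your outline is broadly in the spirit of the actual Garnett--Trubowitz argument, which also hinges on continuity and properness of the gap map together with a local invertibility statement. The main divergence is that you propose to hit a dense set via the Its--Matveev finite-gap construction and then pass to limits, whereas Garnett--Trubowitz argue more directly: they show the gap map (on mean-zero potentials) is a local real-analytic isomorphism, prove it is proper, and then conclude global surjectivity from the connectedness of $(\ell^2)^+$. Their route avoids the algebro-geometric machinery entirely.

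Your version carries a genuine technical hazard that you partially flag. In the Its--Matveev step you say ``any placement of $N$ disjoint open tiles \ldots\ consistent with the required eigenvalue asymptotics,'' but for a finite-gap potential to have period exactly $1$ the branch points must satisfy transcendental period relations (integrals of the holomorphic differentials over the $a$-cycles all equal). You cannot first fix gap lengths and placements and then rescale to period $1$, because rescaling changes the gap lengths. So the existence of a period-$1$ finite-gap potential with \emph{prescribed} gap lengths $(\gamma_1,\ldots,\gamma_N)$ is itself a nontrivial surjectivity statement, essentially equivalent to the finite-dimensional case of the theorem you are trying to prove. Without an independent argument for that case your scheme is circular. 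The Garnett--Trubowitz degree/properness argument sidesteps this by never needing an explicit preimage at any stage.
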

Furthermore if we multiply the gap lengths $\gamma_m$ by $\epsilon$ where $\epsilon$ is in $[0,1]$ then the map (\ref{bianalytic}) is still onto.  The fundamental result in $\mathbb{R}^1$ is that the set of analytic periodic potentials $M(\epsilon)$ with the same periodic and anti-periodic spectra is equivalent to a torus with dimension equal to $I$ \cite{mckean}. Here $I$ is the number of $m$ for which $\lambda_m^-<\lambda_m^+$. The coordinates $\alpha_{m}(q)$, on this manifold with $m$ referring to the $m^{th}$ gap on $q(s)$, are related to the Dirichlet spectra and the gap lengths. They are defined as follows
\begin{equation}\label{coord}
\sin^2\alpha_{m}(q)=\frac{\mu_{m}(q)-\lambda_{m}^{-}}{\lambda_{m}^+-\lambda_{m}^-} \qquad -\frac{\pi}{2}<\alpha_{m}\leq \frac{\pi}{2}
\end{equation}
where $\mu_{m}(q)$ is the Dirichlet eigenvalue for $q$ such that $\lambda_m^-\leq \mu_m(q)\leq \lambda_m^+$. These coordinates are further discussed in Section 4.  


Finally we will need the fact that all the gap lengths are exponentially decreasing if and only if $q(s)$ is real analytic. Whenever $q$ has only a finite number of open gaps, then $q$ must be real analytic, \cite{trubowitz}. The analyticity of $q(s)$ with finitely many gaps is crucial in many of the proofs of the theorems in this paper. 

\section{Review of Necessary Results in $\mathbb{R}^n$}
We outline some necessary results and definitions from \cite{ert1} and \cite{ert2} which will be used in the rest of this paper. 
Let $\mathbb{L}$ be an $n$-dimensional lattice generated by $n$ vectors $v_1, v_2, . . . ,v_n$. We can then consider it's dual $\mathbb{L}^*$ where
\[
\mathbb{L}^*=\{\delta \in \mathbb{R}^n: \delta\cdot v \in \mathbb{Z},  \forall v\in \mathbb{L}\},
\]
to be generated by some basis $\delta_1, \delta_2, . . .,\delta_n$. A function is periodic over the lattice $\mathbb{L}$ if $q(x+d)=q(x)$ for all $d$ in $\mathbb{L}$.  For any arbitrary lattice $\mathbb{L}$ satisfying condition (2) and basis fixed as above, let $\mathbb{S}^*$ be the set of fundamental directions for $\mathbb{L}$, that is
\[
\mathbb{S}^*=\{\delta \in \mathbb{L}^*: \delta\cdot d=1 \text{ for some } d\in \mathbb{L}\}.
\]
It is clear that whenever $\delta$ is in $\mathbb{S}^*$ then $-\delta$ is also in this set, so we reduce the set to $\mathbb{S}$ by only picking $\delta$ in $\mathbb{S}^*$. Therefore any  element of $\mathbb{L}^*/\{0\}$ has a unique representation as $m\delta$ with $\delta$ in $\mathbb{S}$ and $m$ in $\mathbb{Z}$.

If $q$ is a function which is periodic over $\mathbb{L}$, then it has the following Fourier series representation
\[
q(x)=\sum\limits_{\delta\in \mathbb{L}^*} a_{\delta}\exp\sparen{2\pi i\delta \cdot x}
\]
with 
\[
a_{\delta}=\frac{1}{Vol(\Gamma)}\int\limits_{\Gamma}q(x)\exp\sparen{-2\pi i\delta \cdot x}\,dx
\]
where $\Gamma$ the fundamental domain of the lattice $\mathbb{L}$ as given by 
\[
\Gamma=\{ s_1v_1+. . . + s_n v_n: 0\leq s_i\leq 1\}.
\]
If we write 
\begin{align*}
|\delta|^2q_{\delta}(s)=\sum\limits_{n\in \mathbb{Z}}a_{n\delta}\exp(2\pi i n s)
\end{align*}
then we have that 
\[
q(x)=\sum\limits_{\delta\in \mathbb{S}}\sum\limits_{n\in\mathbb{Z}}a_{n\delta}\exp\sparen{2\pi i n\delta \cdot x}=\sum\limits_{\delta \in \mathbb{S}}|\delta|^2q_{\delta}(\delta\cdot x)
\]
where each $q_{\delta}(s)$ is a periodic potential on $\mathbb{R}^1$. These one-dimensional potentials $q_{\delta}(s)'s$ are called directional potentials. 
The assumption that $q(x)$ has mean zero is equivalent to setting $a_{0}=0$ for all the directional potentials. 


Theorem 2 in (\cite{ert1}, \cite{ert2}) states that 
\begin{thm}\label{ert}
Spec$(-\Delta+q)$ determines
\begin{align*}
\mathrm{Spec}_k\sparen{-\frac{d^2}{ds^2}+q_{\delta}(s)}\qquad \forall \delta\in \mathbb{S}, k\in \mathbb{R}
\end{align*}
\end{thm}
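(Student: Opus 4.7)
The plan is to use heat-trace (equivalently wave-trace) invariants on the flat torus $\mathbb{R}^2/\mathbb{L}$ together with the non-orthogonality hypothesis (2). Since $\mathrm{Spec}(-\Delta+q)$ is given, the partition function $Z(t)=\mathrm{tr}\sparen{e^{-t(-\Delta+q)}}$ is known for all $t>0$, and hence so is the wave trace $W(t)=\mathrm{tr}\sparen{\cos(t\sqrt{-\Delta+q})}$ as a tempered distribution on $\mathbb{R}$. A parametrix construction for the heat kernel on $\mathbb{R}^2/\mathbb{L}$ (equivalently, the semiclassical trace formula for $W$) produces, because the torus is flat and its closed classical trajectories are labelled by lattice vectors $d\in\mathbb{L}$ with lengths $|d|$, an asymptotic expansion as $t\to 0^+$ of the form
\[
Z(t)\sim \frac{1}{4\pi t}\sum_{d\in\mathbb{L}} e^{-|d|^2/(4t)}\sparen{A_{d,0}+A_{d,1}\,t+A_{d,2}\,t^2+\cdots},
\]
where each coefficient $A_{d,k}$ is produced by the transport equation along the closed geodesic of length $|d|$: it is an integral over a fundamental domain in the hyperplane transverse to $d$ of a local polynomial in $q$ and its derivatives evaluated along the line $x_0+s\,d/|d|$, $s\in[0,|d|]$.

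By hypothesis (2), the pairs $\{d,-d\}\subset\mathbb{L}$ give mutually distinct lengths $|d|$, so the exponentials $e^{-|d|^2/(4t)}$ decay at pairwise distinct rates and the symmetric sums $A_{d,k}+A_{-d,k}$ are all individually extractable from the asymptotics of $Z(t)$. Next I would substitute the directional decomposition $q(x)=\sum_{\delta'\in\mathbb{S}}|\delta'|^2 q_{\delta'}(\delta'\cdot x)$ into the polynomial defining $A_{d,k}$ and perform the transverse integration. Because every $\delta'\in\mathbb{L}^*$ satisfies $\delta'\cdot d\in\mathbb{Z}$, the monomials in $q_{\delta'}$ along the geodesic for which $\delta'\cdot d\neq 0$ integrate to zero by the mean-zero hypothesis; the transverse averaging then kills all remaining cross-terms coming from $\delta'$'s not proportional to $d$. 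What survives in $A_{nd(\delta),k}$ is precisely the heat-invariant expression associated to $q_\delta$ alone.

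Collecting the surviving invariants across $n\geq 1$ recovers the full heat trace of the one-dimensional operator $-\frac{d^2}{ds^2}+q_\delta(s)$ on $\mathbb{R}/\mathbb{Z}$ together with all its iterates, which in turn determines $\mathrm{Spec}\sparen{-\frac{d^2}{ds^2}+q_\delta}$ on $L^2(\mathbb{R})$ (the union of bands). By the Hill-operator theory recalled in Section 2, the bands determine the Hill discriminant and thence $\mathrm{Spec}_k\sparen{-\frac{d^2}{ds^2}+q_\delta}$ for every $k\in\mathbb{R}$.

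The hard part is the separation argument in the second paragraph: the higher heat invariants involve nonlinear combinations such as $q^2$ and $q\,\Delta q$, and one must rule out resonant cross-contributions from pairs $q_{\delta'},q_{\delta''}$ with $\delta'\neq\delta''$ that could a priori contribute to the length-$|d|$ invariant. This is precisely where condition (2) does the essential work, and it is also why the analogous question for orthogonal lattices $\mathbb{Z}^2$ treated in \cite{kg1,kg2} requires a genuinely different analysis.
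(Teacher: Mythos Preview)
The paper does not prove this theorem; it is quoted verbatim as ``Theorem 2 in \cite{ert1}, \cite{ert2}'' and the only indication of method is the remark in Section~4 that Eskin--Ralston--Trubowitz derive it from the asymptotics of $\int_\Gamma G(x+Nd+e,x,t)\,dx$ as $N\to\infty$. So there is no in-paper proof to compare against, only that one-line sketch of the original argument.

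Your heat-trace outline is in the right family, and your use of condition (2) to separate the lattice-vector contributions by length is exactly the mechanism ERT exploit. But the gap you flag in your last paragraph is real and is not closed by what you wrote. For a \emph{fixed} lattice vector $d$, the coefficients $A_{d,k}$ genuinely mix directional potentials: already the second heat coefficient contains terms of the type $\int_\Gamma q(x)\bigl(\int_0^1 q(x+sd)\,ds\bigr)\,dx$, and such expressions do not reduce to a single $q_\delta$ under transverse averaging. Your linear argument (``$\delta'\cdot d\neq 0$ forces the line integral to vanish by mean zero'') handles only the first-order term; it does not extend to the nonlinear ones. The ERT device is different: they do not try to kill cross-terms at fixed $d$, but instead send $d=Nd_0+e$ with $N\to\infty$, in which regime the off-diagonal heat kernel itself converges (after rescaling) to the one-dimensional heat kernel of the directional operator. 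This recovers the full one-dimensional theta function $\sum_{\lambda}e^{-\lambda t}$ over the Floquet spectrum, not merely its small-$t$ coefficients.

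There is a second gap at the end. Even if you succeeded in isolating all the one-dimensional heat invariants of $q_\delta$, these are the local KdV polynomials and determine only the small-$t$ asymptotic expansion of the one-dimensional trace. That expansion does not determine the band endpoints, so the sentence ``recovers the full heat trace \ldots\ which in turn determines $\mathrm{Spec}$'' is not justified. The $N\to\infty$ method avoids this because it outputs the trace itself, not its asymptotic series.
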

The theorems in $\mathbb{R}^1$ we mentioned will help reduce the study of periodic potentials in $\mathbb{R}^n$ to the study of $\mathbb{R}^1$ potentials, about which much more is known. 

\section{Potentials in $\mathbb{R}^2$}\label{section2}

Following \cite{ert2}, for the rest of this paper we assume that the elements of the lattice $\mathbb{L}$ satisfy condition (2) as stated in the introduction, and we consider analytic periodic potentials $q(x)$ such that $q(x+d)=q(x)$ for all $d$ in $\mathbb{L}$. We also only consider potentials with a finite number of directional potentials. For this section, we make the additional assumptions that the number of gaps in each direction $\delta_j$ is finite, and that there are at least $3$ directions. This setup differs from \cite{ert2}  where two of the directional potentials were fixed translates of the one gap potentials and the other directions were viewed as perturbations of the zero potential. 


Under these assumptions we can simplify the form of $q(x)$ as follows
\begin{align}\label{reduction}
q(x)=\sum_{j=1}^{S}|\delta_j|^2q_j(\delta_{j}\cdot x).
\end{align}
Each one dimensional directional potential $q_j(\delta_j\cdot x)$ corresponds to a one dimensional operator with corresponding eigenvalue  and eigenfunction pair $(\lambda,\phi(s))$ satisfying
\begin{align}\label{eigenequation}
-\frac{d^2}{ds^2}\phi(s)+q_j(s)\phi(s)=\lambda\phi(s).
\end{align}

In order to simplify the computations needed in this paper we make the following assumptions (*)
\begin{enumerate}
\item $\delta_3=\delta_1+ \delta_2$
\item $q_1, q_2$ and $q_3$ have the same number of open gaps 
\end{enumerate}
We will discuss how, given sufficient time and energy, using spectral invariants and the standard perturbation techniques that one could remove the assumptions (*).  The invariants are derived from the trace theorems. If we let the fundamental solution of the heat equation 
\begin{align}
\frac{\partial u}{\partial t}=\Delta u- qu \qquad u(0,x)=f(x)
\end{align}
on $\mathbb{R}^n$ be $G(x,y,t)$ then 
\begin{align}
\sum\limits_{\lambda\in \mathrm{Spec}_k}\exp(-\lambda t)=\sum\limits_{d\in \mathbb{L}}\exp(-2\pi i k\cdot d)\int\limits_{\Gamma}G(x+d,x,t)\,dx
\end{align}
Therefore if one knows Spec$_k(-\Delta+q)$ for all $k$, then one knows 
\begin{align}
\int\limits_{\Gamma}G(x+d,x,t)\,dx \qquad \forall t>0, d\in \mathbb{L}
\end{align}
In \cite{ert1} and \cite{ert2}, they derive Theorem \ref{ert} from the asymptotics of 
\begin{align}\label{heatkernel}
\int\limits_{\Gamma}G(x+Nd+e,x,t)\,dx \qquad \forall t>0, d\in \mathbb{L}
\end{align}
as $N\rightarrow \infty$. 

Theorem \ref{ert}  has the consequence that the set of real-analytic $\tilde{q}(x)$ isospectral to $q(x)$ can be identified with a subset of a real analytic manifold 
\[
M=T_1\times T_2\times . . . \times T_S.
\]
Here each torus $T_j$ has dimension equal to the number of open gaps associated to each directional potential $q(\delta_j\cdot x)$; we call this set $I_j$. This manifold $M$ has dimension $\sum\limits_j|I_j|=N$. Again, the coordinates on the manifold $\alpha_{j,m}(q)$ are given for each $j$ by (\ref{coord}).

 In our case, we would like our set of potentials which we will call $M(\epsilon)$ to have open gap lengths which are parametrized as follows. Let $E_0$ denote the set
\begin{align*}
\{(j,m): (j,m)= (1,1), (2,1)\},
\end{align*}
and $E_1$ denote the set 
\begin{align*}
\{(j,m):  j\leq 2, m>1\}.
\end{align*}
Now we let $\epsilon$ be the vector with four components $(\epsilon_1,\epsilon_2,\epsilon_3,\epsilon_4)$ so we can parametrize the new gap lengths so they depend on $\vec{\epsilon}$ and $\gamma$ as follows 
\begin{align*}
\gamma_{j,m}(\epsilon,\gamma)=\epsilon_j\gamma_{j,m} \,\,  \mathrm{for} \, (j,m)\in E_0 &\\
\gamma_{j,m}(\epsilon,\gamma)=\epsilon_4\gamma_{j,m} \,\,  \mathrm{for} \,\, (j,m)\in E_1 &\\
\gamma_{3,m}(\epsilon,\gamma)=\epsilon_3\gamma_{3,m} \,\,  \mathrm{for} \,\,  m\in I_3&\\
\gamma_{j,m}(\epsilon,\gamma)=\epsilon_4\gamma_{j,m} \,\,  \mathrm{for} \,\, j>3, m\in I_j
\end{align*}
and are associated with the potential $q(\epsilon,x,\alpha)$. Here,  suppressing the "$q$", we have $\alpha=\{\alpha_{j,m}\}$ is the rescaled vector of coordinates, where for each directional potential, the coordinates are given by (\ref{coord}). Notice that we have also written our gap lengths in terms of finitely many parameters and this does not destroy the fact the mapping (\ref{bianalytic}) is onto and in this case analytic.  

The following spectral invariants are derived from higher order terms in the asymptotics of \ref{heatkernel} in \cite{ert2} which we will use in our computations: 
\begin{thm}\label{ert4}
The periodic and anti-periodic spectra for the one dimensional potentials $q_{\delta}(x)$ which form $q(x)$ and the invariants 
\begin{align}\label{invariant}
&\Phi_{\delta_j,m}(\epsilon,\alpha)
\\&\nonumber =\Phi_{j,m}(\epsilon,\alpha)=\int_{\Gamma}|h(\epsilon,x,\alpha)|^2(\phi_{j,m}^{\pm}(\epsilon, \delta_j\cdot x,\alpha))^2\,dx
\end{align}
when  $\lambda_{j,m}^+>\lambda_{j,m}^-$ and 
\begin{align}\label{invariantclosed}
&\Phi_{\delta_j,m}(\epsilon,\alpha)=\Phi_{j,m}(\epsilon,\alpha)
\\&\nonumber = \int_{\Gamma}|h(\epsilon,x,\alpha)|^2\sparen{(\phi_{j,m}^{+}(\epsilon, \delta_j\cdot x, \alpha))^2+(\phi_{j,m}^{-}(\epsilon,\delta_j\cdot x,\alpha))^2}\,dx
\end{align}
when  $\lambda_{j,m}^+=\lambda_{j,m}^-$ maybe recovered from the spectra of $q(x)$. Here $\alpha=\{\alpha_{j,m}\}$ is the collection of coordinates associated to each gap length and we have set
\[
h(\epsilon, x, \alpha)=\sum\limits_{\substack{e \in \mathbb{S} \\ e\cdot d_j\neq 0}}\frac{e}{e\cdot d_j}q_{e}(\epsilon, e\cdot x, \alpha)
\]
with $\delta_j \cdot d_j =0$, and $d_j$ of minimal length. 
\end{thm}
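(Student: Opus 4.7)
The first assertion — recovery of the one-dimensional periodic and anti-periodic spectra of each $q_\delta$ — is immediate from Theorem \ref{ert}, so the proposal concentrates on the invariants $\Phi_{j,m}$. By (\ref{heatkernel}), knowledge of the Floquet spectrum determines
\[
K_N(e,t) = \int_\Gamma G(x+Nd_j+e,x,t)\,dx
\]
for every $N\in \mathbb{Z}$, $e\in\mathbb{L}$, and $t>0$. Here $d_j$ is the minimal lattice vector orthogonal to $\delta_j$, chosen so that the Fourier phases coming from the directional potential $q_j$ itself are inert under translation by $Nd_j$, while the phases coming from the other directional potentials oscillate and can be exploited as $N\to\infty$. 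The plan is to extract $\Phi_{j,m}$ from a refined large-$N$ expansion of $K_N(e,t)$, sharpening the analysis of \cite{ert1, ert2} that already yielded Theorem \ref{ert}.

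The argument proceeds by splitting $q(x)=q_j(\delta_j\cdot x)+r_j(x)$ and expanding $G$ by Duhamel's formula around the heat kernel $G_j$ of the reference operator $-\Delta+q_j(\delta_j\cdot x)$. Since this operator separates, $G_j$ factors into a one-dimensional Hill kernel built from $\phi_{j,m}^{\pm}$ in the $\delta_j$-direction and a free Gaussian transverse to $\delta_j$. The zeroth-order contribution to $K_N$ is a sum over $m$ with coefficient proportional to $(\phi_{j,m}^{\pm})^2 e^{-\lambda_{j,m}^{\pm}t}$ multiplied by a Gaussian factor in $N|d_j|$, from which the 1D spectra are read off as in Theorem \ref{ert}. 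The first-order contribution vanishes after $x$-integration because $r_j$ has mean zero on $\Gamma$ and its Fourier support lies strictly off the $\delta_j$-axis. The key is the second-order term, which contains pairs of Fourier modes $\exp(2\pi i(e+e')\cdot x)$ with $e\cdot d_j,\,e'\cdot d_j\neq 0$. Performing the stationary-phase/asymptotic analysis as $N\to\infty$ in the directions transverse to $d_j$ produces weights $(e\cdot e')/[(e\cdot d_j)(e'\cdot d_j)]$, which are precisely the Fourier coefficients of $|h(\epsilon,x,\alpha)|^2$ as defined in the theorem. Collapsing the two intermediate heat kernels to the diagonal through the semigroup property leaves the factor $(\phi_{j,m}^{\pm})^2$ together with the integration over $\Gamma$.

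The invariants $\Phi_{j,m}$ are then identified as the coefficients of $e^{-\lambda_{j,m}^{\pm}t}$ in this expansion. When $\lambda_{j,m}^+>\lambda_{j,m}^-$ the eigenfunction is unique up to sign, producing (\ref{invariant}); when $\lambda_{j,m}^+=\lambda_{j,m}^-$ the two-dimensional eigenspace is only probed through the basis-independent combination $(\phi_{j,m}^+)^2+(\phi_{j,m}^-)^2$, producing (\ref{invariantclosed}). The main obstacle I expect is the uniform control of the remainders in the Dyson expansion — both the higher-order Dyson iterates and the subleading stationary-phase corrections transverse to $d_j$ — which must be shown to be of strictly smaller order than the second-order contribution as $N\to\infty$. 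The rapid decay of the Fourier coefficients of each $q_\delta$ coming from the assumed real-analyticity, together with the finiteness of the number of directional potentials, should make these estimates tractable along the lines already developed in \cite{ert1,ert2}.
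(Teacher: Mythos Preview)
The paper does not actually prove Theorem~\ref{ert4}; it is quoted verbatim as a result ``derived from higher order terms in the asymptotics of (\ref{heatkernel}) in \cite{ert2}'' and then used as a black box. Your sketch --- splitting $q=q_j(\delta_j\cdot x)+r_j$, Dyson-expanding the heat kernel about $-\Delta+q_j$, observing that the zeroth order recovers the one-dimensional spectra, the first order vanishes by mean zero, and the second order produces the weights $(e\cdot e')/[(e\cdot d_j)(e'\cdot d_j)]$ that assemble into $|h|^2$ --- is precisely the Eskin--Ralston--Trubowitz argument in \cite{ert2}, so your approach and the paper's (by citation) coincide. The only adjustment is that you describe this as ``sharpening'' \cite{ert1,ert2}; in fact the invariants $\Phi_{j,m}$ are already extracted there, so nothing beyond a faithful outline of their proof is required.
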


Setting $\Phi_{\delta_j,m}^+(\epsilon,\alpha)=\Phi_{j,m}(\epsilon,\alpha)$, then the number of invariants with $\lambda_m^+>\lambda_m^-$ has dimension equal to the manifold $M(\epsilon)$. We would like to show that the Jacobian determinant of the invariants with respect to the coordinates $\alpha$ is nonzero so that we may apply the implicit function theorem. 

We will primarily be calculating the spectral invariants for potentials at a specific parameter $\epsilon=\epsilon_0$. We let $\epsilon_0$ be the vector with $(\epsilon_1,\epsilon_2,0,0)$ where $\epsilon_1$ and $\epsilon_2$ are in $(0,1)$. When $\epsilon=\epsilon_0$ the potential $q(\epsilon_0,x,\alpha)$ has 
\begin{align*}
\gamma_{j,m}(\epsilon_0,\gamma)=\epsilon_j\gamma_{j,m} \,\,  \mathrm{for} \, (j,m)\in E_0 &\\
\gamma_{j,m}(\epsilon_0,\gamma)=0 \,\,  \mathrm{for} \, (j,m)\in E_0^c
\end{align*}
for gap lengths. The potential $q(\epsilon_0,x,\alpha)$  is therefore the sum of $2$ potentials with only one gap, one in each direction $\delta_j$, $j=1,2$. The rest of the directional potentials are zero.  While the limit $q(\epsilon_0,x,\alpha)$ coincides with the form of the potential as calculated in \cite{ert2}, one specific difference remains- the first two directional have finitely many gaps, they are not just translates of the $\wp$ function. 
We will Taylor expand the Jacobian determinant with respect to $\epsilon_3$ around $\epsilon\neq \epsilon_0$ and use these computations to show that the Jacobian determinant for certain fixed $\alpha$ is not identically zero. 

For the rest of this paper,  we let  $\wp(s+\frac{i\tau}{2},\tau)$ denote a general normalized Weierstrass $\wp$ function. Whenever the parameter $\tau$ is real and greater than zero, then $\wp(s+\frac{i\tau}{2},\tau)$ is real-valued with periods $1$ and $\tau$ \cite{complex}. The real-valued $\wp$-function is always even about $\frac{1}{2}$, and by a theorem of Hochstadt \cite{hochstadt}, all one gap potentials are translates of the $\wp$-function. The directional potential, in the limit, $q_j(\epsilon_0,s,\alpha)=\wp(s+\frac{i\tau_j}{2}+\nu_j,\tau_j)$ has eigenfunctions which satisfy the following equation: 
\begin{equation*}
-\frac{d^2}{ds^2}\phi(\epsilon_0,s,\alpha)+q_j(s)\phi(\epsilon_0,s,\alpha)=\lambda\phi(\epsilon_0,s,\alpha).
\end{equation*}
where $q_j(\epsilon_0,0,\alpha)=\wp(\frac{i\tau_j}{2}+\nu_j,\tau_j)$ has bands given by
\begin{equation}\label{spectrum}
[-\wp\sparen{\frac{1}{2}},-\wp\sparen{\frac{i\tau_j+1}{2}}]\cup[-\wp\sparen{\frac{i\tau_j}{2}},+\infty).
\end{equation}
Aligning the classical elliptic function theory with spectral theory \cite{kaihua} we have that, 
\begin{align}
 -\wp\sparen{\frac{1}{2}}=\lambda_0 \qquad -\wp\sparen{\frac{i\tau_j+1}{2}}=\lambda_1^- \qquad -\wp\sparen{\frac{i\tau_j}{2}}=\lambda_1^+.
\end{align}
We will need the parameters $\tau_j$ later in the computation of the Fourier coefficients of the $\wp$ function and the perturbation calculations for the eigenfunctions. From equation (\ref{spectrum}) we know that they are related to the $\epsilon_j$ as follows
\begin{align}\label{gapone}
\wp\sparen{\frac{i\tau_j+1}{2}}-\wp\sparen{\frac{i\tau_j}{2}}=\epsilon_{j}\gamma_{j,1}
\end{align}
for $j=1,2$. Therefore if we pick $\epsilon_j$, we pick $\tau_j$ and vice versa. 

Since any potential $q(x,\epsilon,\alpha)$ is always Floquet isospectral to $q(\pm x+ a,\epsilon,\alpha)$ where $a$ is arbitrary, we cannot hope to remove the sign or translation degeneracy. We know that when $\epsilon=\epsilon_0$ that  $\delta_1\cdot a=\nu_1$ and $\delta_2\cdot a=\nu_2$, so for simplicity we fix $a$ so when $\epsilon=\epsilon_0$ then $a=0$. As a result we have that 
\[
q_j(s,\alpha,\epsilon_0)=\wp_j(s+\frac{i\tau_j}{2},\tau_j)=\sum\limits_{n\in \mathbb{N}}a_n^j\cos(2\pi n s) 
\] 
for $j=1,2$, where the coefficients $a_n^j$ are given by Appendix \ref{fcop}. We consider our manifold $M(\epsilon)$ of potentials which have translation fixed as above. 

In order to prove that $M(\epsilon)$ actually is an analytic manifold with coordinates $\alpha=\{\alpha_{j,m}(q)\}$ we must first remind the reader of a few definitions involved in the selection of the coordinates $\{\alpha_{j,m}\}$ defined by (\ref{coord}) as they are related to the Dirichlet spectra $\mu_{j,m}(q)$ of the operator. 
We define the discriminant $\Delta(\lambda)$ as follows
\begin{equation}\label{discriminant}
\Delta^2(\lambda)-4=4(\lambda_0-\lambda)\prod\limits_{n=1}^{\infty}\frac{(\lambda_{n}^+-\lambda)(\lambda_{n}^--\lambda)}{n^4\pi^4}.
\end{equation}
Let $\mu_{m}(s, q_j)=\mu_{j,m}(\epsilon,s,\alpha)$ be the the solution to the system (where here we are suppressing the $j$)
\begin{align}\label{system}
&\frac{d\mu_{m}(\epsilon,s,\alpha)}{ds}
= m^2\pi^2\frac{\sqrt{\Delta^2(\mu_m)-4}}{\prod\limits_{\substack{n\in I,\\ n\neq m}}(\mu_{n}(\epsilon,s,\alpha)-\mu_{m}(\epsilon,s,\alpha))/n^2\pi^2}
\end{align}
with $\mu_{j,m}(\epsilon,0,\alpha)=\mu_{m}(0,q_j), k\in I$, where the choice of signs is initially by the sign of numerator, and changes whenever $\mu_{j,m}(\epsilon,s,\alpha)$ hits $\lambda_{j,m}^{\pm}$. The proof of analyticity of $\mu$ by examining (\ref{system}) remains almost exactly the same as in \cite{ert2} and is omitted here. Since there are a finite number of coordinates, it is easy to see that analyticity in each coordinate is preserved, and hence $M(\epsilon)$ is still an analytic manifold

By McKean-Van Moerbeke \cite{mckean}, the initial value the sum of the initial values, $\mu_{j,m}(\epsilon,0,\alpha)$, is related to each directional potential $q_j(\epsilon,0,\alpha)$ in the following way 
\begin{equation*}
q_j(\epsilon,0,\alpha)=\lambda_{0}+\sum\limits_{m\in I_j}(\lambda_{j,m}^++\lambda_{j,m}^--2\mu_{j,m}(\epsilon,0,\alpha))
\end{equation*}
and this relationship remains true when the parameter $s$ is varied
\begin{equation}\label{flowq}
q_j(\epsilon,s,\alpha)=\lambda_{0}+\sum\limits_{m\in I_j}(\lambda_{j,m}^++\lambda_{j,m}^--2\mu_{j,m}(\epsilon,s,\alpha)).
\end{equation} 
Using a combination of formulas on pp. 325 and 329, in \cite{trubowitz}, the eigenfunctions for each directional potential corresponding to $\lambda_{j,m}^+$ for all $j$ can be written as
\begin{equation}\label{eigenfunction}
(\phi_{m}^+(\epsilon,s,\alpha))^2=\prod\limits_{n\in I_j}\sparen{\frac{\lambda_{m}^+-\mu_{n}(\epsilon,s,\alpha)}{\lambda_{m}^+-\dot{\lambda}_{n}}}
\end{equation}
where $\dot{\lambda}_{m}$ is the zero of $\frac{\partial\Delta}{\partial\lambda}$ lying between $\lambda_{m}^-$ and $\lambda_{m}^+$. It is important to note here that the formula in \cite{ert2} is a misprint. 
%
We will also need the derivatives of the eigenfunctions which from equation (\ref{eigenfunction}) are 
\begin{equation}\label{eigenfunctionderivative}
2\phi_{m}^+(\epsilon,s,\alpha)\frac{d\phi_{m}^+(\epsilon,s,\alpha)}{ds}=\sum\limits_{n\in I_j}\frac{-1}{\lambda_n^+-\dot{\lambda}_k}\sparen{\frac{d\mu_n(\epsilon,s,\alpha)}{ds}}\prod\limits_{k\neq n}\frac{\lambda_m^+-\mu_k(\epsilon,s,\alpha)}{\lambda_m^+-\dot{\lambda}_k}
\end{equation}
with the derivative for $\phi^-(\epsilon_0,s,\alpha)$ computed similarly.
Let us start by considering the eigenfunctions for those directional potentials with $j>3$. Because we are looking for the root between $\lambda_{j,m}^+$ and $\lambda_{j,m}^-$ when $\epsilon= \epsilon_0$, we make the substitution $\lambda=\lambda_{j,m}^-+\epsilon_4\gamma_{j,m}\tilde{\lambda}$ into (\ref{discriminant}) to find that 
\[
\Delta^2(\tilde{\lambda})-4=\epsilon_4^2\tilde{\lambda}(1-\tilde{\lambda})f(\epsilon_4\tilde{\lambda},\epsilon_4)
\]
where $f(z,\epsilon_4)$ is analytic and $f(0,0)=\gamma_{m}^2\neq 0.$ Therefore for $\epsilon_4$ sufficiently small, $\dot{\lambda}_{m}$ corresponds to the root of 
\[
0=(1-2\tilde{\lambda})f(\epsilon_4\tilde{\lambda},\epsilon_4)+\epsilon_4\tilde{\lambda}(1-\tilde{\lambda})\frac{\partial f}{\partial z}(\epsilon_4\tilde{\lambda},\epsilon_4)
\] 
near $\tilde{\lambda}=\frac{1}{2}$. As a result, the following estimate holds 
\begin{equation}\label{ratio}
\frac{\lambda_{m}^+(\epsilon)-\lambda_m^-(\epsilon)}{\lambda^+_{m}(\epsilon)-\dot{\lambda}_{m}(\epsilon)}=2+\mathcal{O}(\epsilon_4).
\end{equation}
giving that 
\begin{equation}\label{main}
\frac{\lambda_{m}^+(\epsilon_0)-\mu_{m}(\epsilon_0,\alpha,s)}{\lambda^+_{m}(\epsilon_0)-\dot{\lambda}_{m}(\epsilon_0)}=2\cos^2(\tilde{\alpha}_{m}(s,\alpha)).
\end{equation}
The variable $\tilde{\alpha}_m(s,\alpha)$ denotes the solution to the system (\ref{system}) where  $\epsilon=\epsilon_0$ with initial condition $\alpha$ under the change of variables (\ref{coord}).  The same estimates above are true for the eigenfunctions $\phi_{3,m}^+(\epsilon_0,s,\alpha)$, $m$ in $I_3$ when expanded with respect to $\epsilon_3$.  We can conclude for all $j\geq 3$
\begin{equation}\label{eigenplus}
\phi_{j,m}^+(\epsilon_0,s,\alpha)=\sqrt{2}\cos\tilde{\alpha}_{j,m}(s,\alpha)
\end{equation}
where we know we have picked the right sign by verifying the derivative (\ref{eigenfunctionderivative}) in the limit. 

Now we consider the case when $j\leq 2$. When $\epsilon=\epsilon_0$, we have for all $n>1$ that  $\lambda_{j,n}^+=\lambda_{n}^-=\mu_{n}=\dot{\lambda}_{n}$ so that  terms in the product (\ref{eigenfunction}) where $n\neq m$ and $n>1$ become 
\begin{equation}\label{error}
\frac{\lambda_{j,m}^+(\epsilon_0)-\mu_{j,n}(\epsilon_0,s,\alpha)}{\lambda_{j,m}^+(\epsilon_0)-\dot{\lambda}_{j,n}(\epsilon_0)}=1,
\end{equation}
and for $n=1$ we have
\begin{equation}
\frac{\lambda_{j,m}^+(\epsilon_0)-\mu_{j,1}(\epsilon_0)}{\lambda_{j,m}^+(\epsilon_0)-\dot{\lambda}_{j,1}(\epsilon_0)}=\frac{\lambda_{j,m}^+(\epsilon_j)-\lambda_{j,1}^-(\epsilon_j)-\epsilon_j\gamma_{j,1}\sin^2(\tilde{\alpha}_{j,1}(s,\alpha))}{\lambda_{j,m}^+-\dot{\lambda}_{j,1}(\epsilon_j)}.
\end{equation}
Combining equations (\ref{main}) (which is still true for $j\leq 2$) and (\ref{error}), we see that for $\epsilon=\epsilon_0$, and $(j,m)$ in $E_1$,
\begin{equation}\label{eigenm}
(\phi_{j,m}^+(\epsilon_0,\alpha,s))^2=2\cos^2(\tilde{\alpha}_{m}(s,\alpha))\sparen{\frac{\lambda_{j,m}^+(\epsilon_j)-\lambda_{j,1}^-(\epsilon_j)-\epsilon_j\gamma_{j,1}\sin^2(\tilde{\alpha}_{j,1}(s,\alpha))}{\lambda_{j,m}^+(\epsilon_j)-\dot{\lambda}_{j,1}(\epsilon_j)}}.
\end{equation}
Comparing with the derivative computed in (\ref{eigenfunctionderivative}) we know that the correct choice of sign is 
\begin{equation}\label{signchoice}
(\phi_{j,m}^+(\epsilon_0,\alpha,s))=\sqrt{2}\cos(\tilde{\alpha}_{m}(s,\alpha))\sqrt{\frac{\lambda_{j,m}^+(\epsilon_j)-\lambda_{j,1}^-(\epsilon_j)-\epsilon_j\gamma_{j,1}\sin^2(\tilde{\alpha}_{j,1}(s,\alpha))}{\lambda_{j,m}^+(\epsilon_j)-\dot{\lambda}_{j,1}(\epsilon_j)}}.
\end{equation}


The introduction of this setup provides the necessary background to introduce the following theorem:
\begin{thm}\label{rigidtwo}
For all but an analytic set of $(\epsilon_3,\epsilon_4)$ in $[0,1]^2$, there is an open set of potentials satisfying the hypotheses (1),(2) and (*) in $M(\epsilon)$ which are isospectral to only a finite number of other analytic potentials.  
\end{thm}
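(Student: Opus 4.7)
The plan is to apply the implicit function theorem to the invariants of Theorem \ref{ert4}. By Theorem \ref{ert}, the spectrum determines the one-dimensional Floquet spectra of every directional potential and hence the full list of gap lengths $\gamma_{j,m}(\epsilon,\gamma)$; any analytic $\tilde q$ isospectral to a given $q\in M(\epsilon)$ therefore lies in the same $M(\epsilon)$, and its coordinates $\tilde\alpha$ satisfy $\Phi_{j,m}(\epsilon,\tilde\alpha)=\Phi_{j,m}(\epsilon,\alpha)$ for every open gap. Since the number of such invariants equals $N=\dim M(\epsilon)$, the isospectral class of $q$ is contained in a fiber of
\[
\Phi:M(\epsilon)\to\mathbb{R}^N,\qquad \alpha\mapsto\bigl(\Phi_{j,m}(\epsilon,\alpha)\bigr)_{\lambda_{j,m}^+>\lambda_{j,m}^-}.
\]
If the Jacobian determinant
\[
D(\epsilon,\alpha)=\det\!\left(\frac{\partial \Phi_{j,m}}{\partial\alpha_{k,\ell}}(\epsilon,\alpha)\right)
\]
is not identically zero on $M(\epsilon)$, then on the open set $U_\epsilon=\{D\neq 0\}$ the map $\Phi$ is a local diffeomorphism between equidimensional analytic manifolds; compactness of the product of tori $M(\epsilon)$ together with Sard's theorem applied on the analytic set $\{D=0\}$ then forces the fibers of $\Phi$ through a nonempty open subset of $U_\epsilon$ to be finite.

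The analytic dependence of $\Phi_{j,m}$ on $(\epsilon,\alpha)$ makes $D$ a real-analytic function of $(\epsilon_3,\epsilon_4,\alpha)$, so it suffices to produce one triple at which $D\neq 0$. I would Taylor expand $D$ in $\epsilon_3$ around the distinguished parameter $\epsilon_0=(\epsilon_1,\epsilon_2,0,0)$, where $q(\epsilon_0,x,\alpha)$ degenerates to the sum of two one-gap Weierstrass potentials in the directions $\delta_1,\delta_2$ and all $q_j$ for $j\geq 3$ vanish identically. At $\epsilon_0$ the eigenfunctions are given by the closed forms (\ref{eigenplus}) and (\ref{signchoice}), the function $h(\epsilon_0,x,\alpha)$ is an explicit Fourier series with coefficients $a_n^j$ from Appendix \ref{fcop}, and the flow equation (\ref{system}) yields closed-form $\alpha$-derivatives of each $\mu_{j,m}$ and hence of $\Phi_{j,m}$.

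The main obstacle is that the rows of the Jacobian indexed by $\alpha_{3,m}$, $m\in I_3$, and by $\alpha_{j,m}$ with $j>3$ vanish identically at $\epsilon=\epsilon_0$, so $D(\epsilon_0,\alpha)\equiv 0$ and the first nontrivial $\epsilon_3$- and $\epsilon_4$-derivatives of these rows must be extracted. This requires standard perturbation theory for $-d^2/ds^2+\epsilon_3^2 q_3(\epsilon,s,\alpha)$ around the free eigenvalues $\pi^2 m^2$ and expansion of $\mu_{3,m}$ in $\epsilon_3$ via (\ref{system}); once these derivatives are taken, the Jacobian block-decomposes, with one block coming from $(j,m)$ with $j\leq 2$ tested against the two Weierstrass modes (whose non-singularity is controlled by the non-vanishing of the first few $\wp$-Fourier coefficients $a_n^j$), and a second block indexed by $(3,m)$ in which the invariant $\Phi_{3,m}$ pairs $h$ against $\cos^2(\tilde\alpha_{3,m})$ together with the $\delta_1+\delta_2=\delta_3$ Fourier modes arising from the product $q_1 q_2$ inside $h$. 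The resulting determinant then reduces to a trigonometric Vandermonde-type expression in the angles $\tilde\alpha_{3,m}(s,\alpha)$, which is nonzero for a generic point of the torus $T_3$; an analogous expansion in $\epsilon_4$ handles the directions $j>3$ and the higher-index gaps of $q_1,q_2$. The arithmetic hypothesis $\delta_3=\delta_1+\delta_2$ in $(*)$ enters precisely here, since it is what guarantees that the $\delta_3$-Fourier modes of $h$ receive a nontrivial contribution from the product $q_1 q_2$.

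Having exhibited one point at which the leading Taylor coefficient of $D$ is nonzero, the set
\[
Z=\{(\epsilon_3,\epsilon_4)\in[0,1]^2:D(\epsilon_3,\epsilon_4,\cdot)\equiv 0\text{ on }M(\epsilon)\}
\]
is a proper real-analytic subvariety of $[0,1]^2$. For every $(\epsilon_3,\epsilon_4)\notin Z$ the open set $U_\epsilon=\{\alpha\in M(\epsilon):D(\epsilon,\alpha)\neq 0\}$ is nonempty, and the potentials indexed by any $\alpha\in U_\epsilon$ with $\Phi(\alpha)$ a regular value of $\Phi$---an open dense subset of $U_\epsilon$ by Sard's theorem---have finite isospectral class inside $M(\epsilon)$, which yields the required open set of Floquet rigid potentials.
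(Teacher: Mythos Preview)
Your overall architecture matches the paper's: reduce to a fiber of $\Phi$, study the Jacobian determinant $D$ as a real-analytic function of $(\epsilon,\alpha)$, Taylor expand in $\epsilon_3$ at $\epsilon_0=(\epsilon_1,\epsilon_2,0,0)$, and then invoke Sard's theorem. But the identification of the degenerate block at $\epsilon_0$ is inverted, and this is where the argument actually breaks.

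At $\epsilon_0$ the $j\geq 3$ rows are \emph{not} the ones that vanish. For $j\geq 3$ one has $q_j=0$, so $\phi_{j,m}^+=\sqrt{2}\cos(\pi m(\delta_j\cdot x)+\alpha_{j,m})$, while $h$ contains the nonzero products of $\wp_1,\wp_2$; since $\delta_j=p_j\delta_1+r_j\delta_2$, the integral $\int_\Gamma \wp_1\wp_2\cos^2(\pi m(\delta_j\cdot x)+\alpha_{j,m})\,dx$ picks up $a^1_{mp_j}a^2_{mr_j}\cos 2\alpha_{j,m}$, which gives a \emph{nonzero} diagonal entry $\partial\Phi_{j,m}/\partial\alpha_{j,m}$ without any $\epsilon_3$- or $\epsilon_4$-differentiation. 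The degenerate rows are instead those with $(j,m)\in E_1$, i.e.\ $j\in\{1,2\}$ and $m>1$: there $h$ at $\epsilon_0$ reduces to $\wp_l(\delta_l\cdot x)$ with $l\neq j$, the integral factorizes as $\int\wp_l^2\cdot\int(\phi_{j,m}^+)^2=\int\wp_l^2$, and $\Phi_{j,m}(\epsilon_0,\alpha)$ is independent of every coordinate. It is these $n=|I_1|+|I_2|-2$ columns that must each receive one $\epsilon_3$-derivative, and the mechanism is that $\partial_{\epsilon_3}q_3(\epsilon_0,\cdot)=\sum_{k\in I_3}\gamma_{3,k}\cos(2\pi k(\delta_3\cdot x)+2\alpha_{3,k})$ couples $\delta_1\cdot x$ and $\delta_2\cdot x$ through $\delta_3=\delta_1+\delta_2$, destroying the factorization. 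The resulting $n$-th derivative of $D$ is block upper triangular with diagonal entries $\partial^2\Phi_{j,m}/\partial\epsilon_3\partial\alpha_{j,m}\approx 2a_m^l\gamma_{3,m}\sin(2\alpha_{3,m}-2\alpha_{j,m})$ in the $E_1$-block (this is where assumption $(*)$ on equal gap counts is used so that $m\in I_3$), not a Vandermonde in the $T_3$-angles. No expansion in $\epsilon_4$ is ever needed.

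A secondary point: you should work with the $(N-2)\times(N-2)$ Jacobian in the variables $\alpha_{j,m}$ with $(j,m)\in E_0^c$, since the two coordinates $\alpha_{1,1},\alpha_{2,1}$ are fixed by the translation normalization; using all $N$ coordinates would force $D\equiv 0$ by translation invariance.
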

In order to find the Jacobian corresponding to the invariants as given by equation (\ref{invariant}), we must first figure out what it means to calculate their derivatives with respect to $\{\alpha_{j,m}\}$ with $(j,m)$ in $E_0^c$. We start with the following lemma
\begin{lem}\label{lemmaone}

For  $(j,m)$ in $E_0^c$, we have
\[
\frac{\partial\tilde{\alpha}_{j,m}(s,\alpha)}{\partial\alpha_{j,m}}=1, \qquad \mathrm{and} \qquad 
\frac{\partial\tilde{\alpha}_{j,m}(s,\alpha)}{\partial\alpha_{r,k}}=0 \qquad \mathrm{when}\,\, (r,k)\neq (j,k)
\]
\end{lem}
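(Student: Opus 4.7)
The plan is to derive the limiting form of the Dirichlet flow (\ref{system}) at $\epsilon=\epsilon_0$ after rescaling the angle variables associated with the closed gaps. For each $(j,m)\in E_0^c$ I substitute the change of coordinates
\[
\mu_{j,m}(\epsilon,s,\alpha) = \lambda_{j,m}^-(\epsilon) + \gamma_{j,m}(\epsilon)\sin^2\tilde{\alpha}_{j,m}(\epsilon,s,\alpha)
\]
into both sides of (\ref{system}). The left hand side produces a factor $2\gamma_{j,m}(\epsilon)\sin\tilde{\alpha}_{j,m}\cos\tilde{\alpha}_{j,m}$, while on the right hand side the $n=m$ term in the product expansion (\ref{discriminant}) of $\sqrt{\Delta^2(\mu_{j,m})-4}$ contributes a matching factor $\gamma_{j,m}(\epsilon)|\sin\tilde{\alpha}_{j,m}\cos\tilde{\alpha}_{j,m}|$. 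After these factors cancel, the resulting equation for $\tilde{\alpha}_{j,m}$ is well-defined uniformly as $\epsilon\to\epsilon_0$, with initial condition $\tilde{\alpha}_{j,m}(\epsilon,0,\alpha)=\alpha_{j,m}$ inherited from (\ref{coord}).

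Two structural features then do the work. First, the Dirichlet flow (\ref{system}) is block diagonal with respect to the directional index $j$: the equation for $\mu_{j,m}$ couples only to $\{\mu_{j,n}\}_{n\in I_j}$ and to the spectral data of the one-dimensional potential $q_j$, as it arises from the McKean--Van Moerbeke flow of that single directional potential. Consequently the initial data $\alpha_{r,k}$ with $r\neq j$ never enter the evolution of $\tilde{\alpha}_{j,m}$, which already delivers $\partial\tilde{\alpha}_{j,m}/\partial\alpha_{r,k}=0$ whenever $r\neq j$. Second, for every $n\neq m$ with $(j,n)\in E_0^c$, both the numerator factor $\sqrt{(\lambda_{j,n}^+-\mu_{j,m})(\mu_{j,m}-\lambda_{j,n}^-)}$ and the denominator factor $(\mu_{j,n}-\mu_{j,m})$ converge, as $\epsilon\to\epsilon_0$, to the common value $\pm(\lambda_{j,n}^\circ-\lambda_{j,m}^\circ)$, so their contributions to the limiting ODE cancel up to sign. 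Combined with the block structure, only spectral constants and, for $j=1,2$, the single open-gap variable $\mu_{j,1}(s)$ survive on the right hand side.

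Crucially, since $\mu_{j,m}\to\lambda_{j,m}^\circ$ is constant in the limit, the reduced ODE has no dependence on $\tilde{\alpha}_{j,m}$ itself. Integrating against the initial condition $\tilde{\alpha}_{j,m}(0,\alpha)=\alpha_{j,m}$, the solution takes the additive form $\tilde{\alpha}_{j,m}(s,\alpha)=\alpha_{j,m}+G_{j,m}(s,\{\alpha_{j,\ell}\}_{(j,\ell)\in E_0})$, where $G_{j,m}$ depends neither on $\alpha_{j,m}$ nor on any $\alpha_{r,k}$ with $r\neq j$. Differentiating gives $\partial\tilde{\alpha}_{j,m}/\partial\alpha_{j,m}=1$ and $\partial\tilde{\alpha}_{j,m}/\partial\alpha_{r,k}=0$ for $r\neq j$, which is the stated conclusion. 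The main obstacle in executing the plan is the careful passage to the limit in (\ref{system}) across the branch crossings of the square root; I expect this to be handled by the same local analysis used to derive the estimate (\ref{ratio}), applied uniformly in each closed-gap direction, together with the analyticity of $\mu_{j,n}(\epsilon,s,\alpha)$ in $\epsilon$ and $\alpha$ inherited from the McKean--Van Moerbeke argument cited just before the lemma.
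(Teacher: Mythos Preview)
Your proposal is correct and follows essentially the same route as the paper's proof. The paper simply writes down the explicit limiting ODEs---for $(j,m)\in E_1$ one gets $\dfrac{d\tilde{\alpha}_{j,m}}{ds}=\dfrac{\sqrt{(\lambda_{j,m}^+-\lambda_0)(\lambda_{j,m}^+-\lambda_{j,1}^+)(\lambda_{j,m}^+-\lambda_{j,1}^-)}}{\lambda_{j,m}^+-\lambda_{j,1}^--\epsilon_j\gamma_{j,1}\sin^2\tilde{\alpha}_{j,1}}$, and for $j\geq 3$ one gets $\dfrac{d\tilde{\alpha}_{j,m}}{ds}=m\pi$---and observes that the right-hand side involves only $\tilde{\alpha}_{j,1}$ (which lies in $E_0$) and never $\tilde{\alpha}_{j,m}$ itself; you instead describe in words the cancellation mechanism (block diagonality in $j$, closed-gap factors pairing off between numerator and denominator) that produces exactly these formulas. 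One small point: your concluding sentence only states $\partial\tilde{\alpha}_{j,m}/\partial\alpha_{r,k}=0$ for $r\neq j$, but your additive formula $\tilde{\alpha}_{j,m}=\alpha_{j,m}+G_{j,m}(s,\{\alpha_{j,\ell}\}_{(j,\ell)\in E_0})$ also gives vanishing for $(r,k)=(j,n)$ with $n\neq m$ and $(j,n)\in E_0^c$, which is needed downstream---you should say so explicitly.
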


\begin{proof}
 Examining (\ref{system}) under the change of variables given by (\ref{coord}) for $(j,m)$ in $E_1$ and $\epsilon=\epsilon_0$

\begin{equation}\label{flowalpham}
\frac{d\tilde{\alpha}_{j,m}(s,\alpha)}{ds}=\frac{\sqrt{(\lambda_{j,m}^+-\lambda_{0})(\lambda_{j,m}^+-\lambda_{j,1}^+)(\lambda_{j,m}^+-\lambda_{j,1}^-)}}{\lambda_{j,m}^+-\lambda_{j,1}^--\epsilon_j\gamma_{j,1}\sin^2\tilde{\alpha}_{j,1}(s,\alpha)}
\end{equation}
Therefore $\tilde{\alpha}_{j,m}(s,\alpha)$ depends only on $\alpha_{j,1}$ and the initial data for $\tilde{\alpha}_{j,m}(0,\alpha)=\alpha_{j,m}$
so the result follows. 

The case whenever $j\geq 3$ and $\epsilon=\epsilon_0$, is much easier to compute. We have for all such corresponding $m$
\begin{equation}\label{flowalphazero}
\frac{d\tilde{\alpha}_{j,m}(s)}{ds}=m\pi
\end{equation}
so again the result follows by the same reasoning above. 
\end{proof}
For the computations done in the appendix, we need to know that when $\epsilon_j=0$, (\ref{signchoice}) agrees with the limit one would expect. In other words for $(j,m)$ in $E_1$, we have
\begin{align}
\phi_{j,m}^+(\epsilon_0,\alpha,s)=\sqrt{2}\cos(\pi m s+\alpha_{j,m})+\mathcal{O}(\epsilon_j) 
\end{align}
which is easily verifiable by Lemma \ref{lemmaone}, and the estimates (\ref{ratio}) and (\ref{error}). We have computed the eigenfunctions in (\ref{signchoice}) to illustrate that they are expressed in terms of elliptic functions, and therefore the invariants will not be explicitly computable. 

We can now prove the main Lemma. If we consider a potential $q(\epsilon,x,\alpha)$ in $M(\epsilon)$ then it is associated to a fixed set of coordinates $\alpha$. Let $\det(J)(\epsilon,\alpha)$ be the Jacobian determinant of the invariants $\Phi_{j,m}(\epsilon,\alpha)$ with respect to the coordinates $\{\alpha_{j,m}\}$ with ${j,m}$ in $E_0^c$, and $\det(J)(\epsilon,\alpha)$ is an $(N-2)\times(N-2)$ determinant. 

The proof of Theorem \ref{rigidtwo} will be based on the following Lemma:
\begin{lem}\label{mainlemma}
There is a choice of $\epsilon_1,\epsilon_2$ in $[0,1]$ such that on a dense open set of $\alpha$,
\begin{align}
\det(J)(\epsilon,\alpha)\neq 0
\end{align} 
\end{lem}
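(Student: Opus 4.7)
The plan is to combine real-analyticity with an explicit computation at the degenerate parameter $\epsilon_0$. Since each invariant $\Phi_{j,m}(\epsilon,\alpha)$ is an integral of a product of real-analytic functions of $\alpha$ (the eigenfunctions (\ref{eigenplus}), (\ref{signchoice}), and the factor $|h(\epsilon,x,\alpha)|^2$), $\det(J)(\epsilon,\alpha)$ is real-analytic in $\alpha$. Consequently, for fixed $\epsilon$ the zero set of $\det(J)(\epsilon,\cdot)$ is either the whole $\alpha$-space or a proper analytic subvariety, whose complement is automatically open and dense. The task thus reduces to exhibiting a single $\epsilon$ and some $\alpha_*$ at which $\det(J)(\epsilon,\alpha_*)\neq 0$; following the indication before the lemma, I would Taylor expand in $\epsilon_3$ about $\epsilon_0=(\epsilon_1,\epsilon_2,0,0)$, where $h(\epsilon_0,x,\alpha)$ collapses to a linear combination of the two $\wp$-type potentials $q_1$ and $q_2$ alone.

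To organize the computation I would order the indices in $E_0^c$ by direction $j$: first $(j,m)\in E_1$ (that is, $j\leq 2$, $m>1$), then $(3,m)$, then $(j,m)$ with $j\geq 4$. At $\epsilon_0$ the entry $\partial \Phi_{j,m}/\partial \alpha_{r,k}$ picks up contributions only from the $\alpha_{r,k}$-dependence of $\phi_{j,m}^{+}$ (which, by Lemma \ref{lemmaone}, sees only direction $j$) or of $|h|^{2}$ (which at $\epsilon_0$ depends only on the $\alpha_{1,\cdot}$ and $\alpha_{2,\cdot}$ coordinates). Consequently the sub-matrix indexed by $j\geq 4$ rows and $r\geq 4$ columns is block-diagonal over the direction index, so $\det(J)$ at $\epsilon_0$ factors as a product of one block per $j\geq 4$ with a coupled block carrying the $E_1$ and $j=3$ indices.

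For each $j\geq 4$, the eigenfunctions are the explicit cosines $\sqrt{2}\cos\tilde\alpha_{j,m}$ with $d\tilde\alpha_{j,m}/ds=m\pi$, so the corresponding block is a matrix of Fourier integrals of $|h(\epsilon_0,x,\alpha)|^{2}$ against trigonometric functions of $\delta_j\cdot x$. Using the Fourier expansion of $\wp$ from Appendix \ref{fcop}, these entries are expressed through the coefficients $a_n^1, a_n^2$, and the block becomes essentially triangular with diagonal entries that are products of specific $a_n^j$; a generic choice of $\tau_1,\tau_2$ (equivalently, of $\epsilon_1,\epsilon_2$ via (\ref{gapone})) makes each such block invertible. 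The coupled block requires Taylor expansion in $\epsilon_3$: using the resonance $\delta_3=\delta_1+\delta_2$ from hypothesis (*), the perturbation of $\phi_{3,m}^{+}$ by $q_3$ couples at leading order to the bilinear product $q_1 q_2$, whose Fourier harmonic at frequency $\delta_3$ is nonzero; combined with the derivatives of $|h|^{2}$ along $E_1$, this coupled block factors again into two explicit sub-determinants in Fourier coefficients of $\wp$, each nondegenerate for generic $\epsilon_1,\epsilon_2$.

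The main obstacle is the last step: verifying that the leading-order Jacobian in the $\epsilon_3$-expansion, built from Fourier coefficients of $\wp_1$ and $\wp_2$ at harmonics dictated by $\delta_1,\delta_2,\delta_3$, does not vanish identically through an accidental cancellation. Granting this, the union of bad parameters (from the diagonal blocks for $j\geq 4$ together with the coupled $j=3$ block) is a proper analytic subset of $[0,1]^{2}$, so $\epsilon_1,\epsilon_2$ can be chosen in its complement. Analyticity in $\alpha$ then upgrades nonvanishing at a single $\alpha_*$ to nonvanishing on a dense open set of $\alpha$, which is the conclusion of the lemma.
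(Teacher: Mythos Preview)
Your overall framework---reduce to a single nonvanishing point via real-analyticity, then Taylor expand in $\epsilon_3$ at $\epsilon_0=(\epsilon_1,\epsilon_2,0,0)$---matches the paper. But the execution misses the central degeneracy and misidentifies the mechanism that removes it.

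First, at $\epsilon_0$ the factor $|h(\epsilon_0,x,\alpha)|^2$ does \emph{not} depend on any $\alpha_{l,k}$ with $(l,k)\in E_0^c$: by observation (1) in the paper, $\partial q_j/\partial\alpha_{l,k}(\epsilon_0,\cdot)=0$ for every $j$ and every $(l,k)\in E_0^c$. Combined with observation (2), the only possible nonzero entry in the column $v_i=\nabla_\alpha\Phi_{j,m}$ for $(j,m)\in E_1$ is the diagonal one $\partial\Phi_{j,m}/\partial\alpha_{j,m}$. But this entry vanishes too, because the integral factorizes into $\int_0^1\wp_l^2\,ds\cdot\int_0^1\partial_{\alpha_{j,m}}(\phi_{j,m}^+)^2\,dt$ and the second factor is the $\alpha_{j,m}$-derivative of the $L^2$-norm of a normalized eigenfunction. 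Thus every column indexed by $E_1$ is identically zero at $\epsilon_0$, so $\det(J)(\epsilon_0,\alpha)\equiv 0$, and your proposed factorization of $\det(J)$ at $\epsilon_0$ into a nondegenerate $j\geq 4$ piece times a ``coupled block'' cannot get off the ground: the whole determinant is zero there, not just one block.

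Second, the way the paper escapes this degeneracy is not through perturbation of $\phi_{3,m}^+$ or through $\alpha$-derivatives of $|h|^2$ along $E_1$. Since exactly $n=|E_1|$ columns vanish at $\epsilon_0$, multilinearity forces $\partial^k_{\epsilon_3}\det(J)(\epsilon_0,\alpha)=0$ for $k<n$, and the first nonvanishing derivative is
\[
\partial^n_{\epsilon_3}\det(J)(\epsilon_0,\alpha)=C(n)\det\Bigl(\partial_{\epsilon_3}v_1,\ldots,\partial_{\epsilon_3}v_n,v_{n+1},\ldots,v_{N-2}\Bigr).
\]
For $(j,m)\in E_1$ the relevant entry $\partial^2\Phi_{j,m}/\partial\epsilon_3\partial\alpha_{j,m}$ comes from $\partial_{\epsilon_3}q_3$ appearing inside $h$ (not inside an eigenfunction), yielding the cross term $q_l\cdot\partial_{\epsilon_3}q_3$ with $l\in\{1,2\}\setminus\{j\}$; the resonance $\delta_3=\delta_1+\delta_2$ is what makes the resulting Fourier integral against $\partial_{\alpha_{j,m}}(\phi_{j,m}^+)^2$ nonzero (Appendix~\ref{perturb}). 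The matrix above is then upper triangular with the $E_1$ diagonal entries given by these mixed second derivatives and the $j\geq 3$ diagonal entries given by $-2c_{1,2,j}a^1_{mp_j}a^2_{mr_j}\sin 2\alpha_{j,m}$; the choice of $\epsilon_1,\epsilon_2$ is made so that the $E_1$ diagonal entries are nonzero off an analytic set in $\alpha$. Your sketch does not isolate the correct order $n=|E_1|$ of differentiation, nor the correct source of the nonzero diagonal entries, and these are precisely the steps where the argument lives.
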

\begin{proof}
We will proceed by showing that for all $k=1$ to $n-1$ 
\begin{align*}
\frac{\partial^k \det(J)}{\partial\epsilon_3^k}(\epsilon_0,\alpha)=0
\end{align*}
while
\begin{align*}
\frac{\partial^n \det(J)}{\partial\epsilon_3^n}(\epsilon_0,\alpha)\neq 0
\end{align*}
where $n=|I_1|+|I_2|-2=|E_1|$. The desired result will follows since we notice that if for some $n$
\begin{align*}
\frac{\partial^n \det(J)}{\partial\epsilon_3^n}(\epsilon_0,\alpha)\neq 0
\qquad \mathrm{and}\qquad
\det(J)(\epsilon,\alpha)\equiv 0
\end{align*}
then this is a contradiction since all of the derivatives of $\det(J)(\epsilon,\alpha)$ evaluated at any $\epsilon$ should be identically zero as well, since $\det(J)(\epsilon,\alpha)$ is an analytic function of $\epsilon$. 

Now we proceed to calculate the derivatives of $\det(J)(\epsilon,\alpha)$. Let the columns $v_i(\epsilon,\alpha)$ of $\det(J)(\epsilon,\alpha)$ be indexed by $i$ where $i$ ranges from $1$ to $N-2$. Each $i$ corresponds to a pair of indices $(j,m)$ such that 
\[
v_i(\epsilon,\alpha)=\nabla_{\alpha}\Phi_{j,m}(\epsilon,\alpha)
\]
where we are considering the pairs $(j,m)$ ordered first by the $j$ and then by the $m$. The perturbation calculations to find the derivatives of the invariants are located in Appendices. In order to examine the Jacobian further, we need the following key observations:
\begin{enumerate}
\item 
$\frac{\partial q_j}{\partial\alpha_{l,k}}(\epsilon_0,\delta_j\cdot x,\alpha)=0 \qquad \forall (l,k)\in E_o^c, \,\,\mathrm{and} \,\, \forall j$
\item 
$\frac{\partial (\phi_{j,m}^+)^2}{\partial\alpha_{l,k}}(\epsilon_0,\delta_j\cdot x,\alpha)=0 \qquad \forall (l,k), (j,m) \in E_0^c\,\, \mathrm{unless}\, (j,k)=(l,m)$
\item
$\frac{\partial q_j}{\partial\epsilon_3}(\epsilon_0,\delta_j\cdot x,\alpha)= \frac{\partial (\phi_{j,m}^+)^2}{\partial\epsilon_3}(\epsilon_0,\delta_j\cdot x,\alpha)=0 \qquad  \forall j\neq 3$
\end{enumerate}
The first two observations follow from Lemma \ref{lemmaone} and formulae (\ref{flowq}) and (\ref{eigenfunction}), respectively. The last observation follows from the parametrization of the open gaps since only $q_3(\epsilon,\delta_3\cdot x,\alpha)$ and $\phi_{3,m}(\epsilon,\delta_3\cdot x,\alpha)$ for $m$ in $I_3$ depend on $\epsilon_3$.  

Going back to equation (\ref{invariant}), each invariant has the form as follows
\begin{align}\label{invarform}
\Phi_{j,m}(\epsilon,\alpha)=\int\limits_{\Gamma}\sabs{\sum\limits_{\substack{l\in N\\ l\neq j}}\frac{\delta_l}{\delta_l\cdot d_j} q_l(\epsilon,\delta_l\cdot x,\alpha)}^2
(\phi^+_{j,m}(\epsilon, \delta_j\cdot x,\alpha))^2\,dx
\end{align}
Now we let $D$ denote a generic constant independent of the coordinates.  When $\epsilon=\epsilon_0$ the form of the invariants (\ref{invarform}) for $j\geq 3$ coincides with that of \cite{ert2}. Since $\delta_1$ and $\delta_2$ form a basis for $\mathbb{S}$, we know that there exists a nonzero pair of integers $(p_l,r_l)$ such that for any third vector $\delta_l \neq \delta_1, \delta_2$ we have $\delta_l= p_l\delta_1+r_l\delta_2$. Therefore when $j\geq 3$
\begin{align}
&\Phi_{j,m}(\epsilon_0,\alpha)= D\int\limits_0^1 \int\limits_ 0^1 (\wp_2(t+\frac{i\tau_2}{2}, \tau_2)) (\wp_1(s+\frac{i\tau_1}{2}, \tau_1))\cos^2(\pi m (p_js+r_j t)+\alpha_{j,m})  \,ds\,dt+D
\end{align}
Exactly as in \cite{ert2}, we have that when $(j,m)$ is such that $j\geq 3$ 
\[
\Phi_{j,m}(\epsilon_0,\alpha)=c_{1,2,j}a_{mp_j}^1a_{mr_j}^2\cos 2\alpha_{j,m}+D
\]
 The coefficients $c_{1,2,j}a_{mp_j}^1a_{mr_j}^2$ are independent of the coordinates and nonzero. They can be found in \ref{fcop}. However for $j$ in $\{1,2\}$, we come across the degeneracy that 
\begin{align}\label{itszero}
\frac{\partial\Phi_{j,m}}{\partial\alpha_{l,k}}(\epsilon_0,\alpha)=0
\end{align}
for all $(l,k)$ in $E_0^c$.
We know from our observations (1) and (2) that (\ref{itszero}) holds  except for possibly when $(l,k)=(j,m)$. In this case since again $\delta_1$ and $\delta_2$ form a basis for $\mathbb{S}$ we can write 
\begin{align}\label{zero}
&\frac{\partial\Phi_{j,m}}{\partial\alpha_{j,m}}(\epsilon_0,\alpha)=\int\limits_{\Gamma} \sabs{\frac{\delta_l}{\delta_l\cdot d_j}q_l(\epsilon_0,\delta_l\cdot x,\alpha)}^2\frac{\partial(\phi^+_{j,m})^2}{\partial\alpha_{j,m}}(\epsilon_0,\delta_j\cdot x,\alpha))\,dx\\=& \nonumber D\int\limits_0^1 \wp^2_l(s+\frac{i\tau_l}{2},\tau_l)\,ds \int\limits_ 0^1\frac{\partial(\phi_{j,m}^+)^2}{\partial\alpha_{j,m}}(\epsilon_0,t,\alpha)\,dt
\end{align}
where $l\neq j$ and $l$ is in $\{1,2\}$. But since we consider our eigenfunctions as normalized for all $(j,m)$, e.g. $||\phi_{j,m}^+(\epsilon,\delta_j\cdot x,\alpha)||_{L^2(\mathbb{R})}=1$,  the right hand side of (\ref{zero}) is just zero. 
 
Therefore for all $i$ from $1$ to $n$ we have
\begin{align*}
v_i(\epsilon_0,\alpha)=0.
\end{align*}
while for all $i$ from $n+1$ to $(N-2)$ we see that
\begin{align}\label{lower}
\sparen{v_i(\epsilon_0,\alpha)}_l^t=\left\{ \begin{array}{rcl} & 0   & l=1, . . , i-1 \\ & c_{1,2,j}a_{mp_j}^1a_{mr_j}^2\sin 2\alpha_{j,m}  & l=i
\\ & 0  & l>i
\end{array}\right \}.
\end{align}
Because the determinant is a multi-linear function of its rows,  we may write 
\begin{align*}
\det (J)(\epsilon_0,\alpha)=\det\sparen{v_1, v_2, . . .,v_n,v_{n+1}, . . ,v_{N-2}}
\end{align*}
It is now clear that for all $k=1$ to $n-1$ 
\begin{align*}
\frac{\partial^k \det(J)}{\partial\epsilon_3^k}(\epsilon_0,\alpha)=0
\end{align*}
however for $k=n$ we have 
\begin{align}\label{det}
\frac{\partial^n \det(J)}{\partial\epsilon_3^n}(\epsilon_0,\alpha)=C(n)\det\sparen{\frac{\partial v_1}{\partial \epsilon_3},\frac{\partial v_2}{\partial \epsilon_3}, . . ,\frac{\partial v_n}{\partial \epsilon_3},v_{n+1}, . . . ,v_{N-2}}.
\end{align}
where $C(n)$ is a constant depending on $n$ only. 

From observations (1-3) we know for $j$ in $\{1,2\}$ 
\begin{align*}
\frac{\partial^2\Phi_{j,m}}{\partial\epsilon_3\partial\alpha_{l,k}}(\epsilon_0,\alpha)=0
\end{align*}
except for possibly when $l=3$ or $(l,k)=(j,m)$. We then note that corresponding rows with $1\leq i\leq n$ in (\ref{upper}) take the form 
\begin{align}\label{upper}
\sparen{\frac{\partial v_i}{\partial \epsilon_3}}_l^t =\left\{ \begin{array}{rcl} & 0  & l=1, . . , i-1 \\ & \frac{\partial^2\Phi_{j,m}}{\partial\alpha_{j,m}\partial\epsilon_3}(\epsilon_0,\alpha)  & l=i  \\ & 0  & r>l>i
\\ & \frac{\partial^2\Phi_{j,m}}{\partial\alpha_{3,j}\partial\epsilon_3}(\epsilon_0,\alpha) & i= r. . . k
\\ & 0  & l>r
\end{array}\right \}
\end{align}
Here the index $r$ corresponds to $(3,1)$ and $k-r=|I_3|$. We can conclude from (\ref{lower}) and (\ref{upper}) the determinant (\ref{det}) is an upper triangular one. The determinant (\ref{det}) looks like
\[
\left| {\begin{array}{cc}
 A & B \\
 0 & C
 \end{array} } \right|
\]
where $A$ is an $n\times n$ block diagonal matrix, and $C$ is an $(N-n-2)\times (N-n-2)$ block diagonal matrix. If the diagonal entries in the upper triangular determinant (\ref{upper}) are nonzero, then we will arrive at the desired result that 
\begin{align}
\frac{\partial^n \det(J)}{\partial^n\epsilon_3}(\epsilon_0,\alpha)\neq 0
\end{align}
The collection of diagonal entries for $(j,m)$ in $E_1$ corresponding the block $A$, for $1\leq i\leq n$ are  
$\frac{\partial^2\Phi_{j,m}}{\partial\epsilon_3\partial\alpha_{j,m}}(\epsilon_0,\alpha).$
From \ref{perturb}, we know that there is a choice of $\epsilon_1$ and $\epsilon_2$ so that these invariants are nonzero except on an analytic set of $\alpha_{j,m}.$
Also from  \ref{perturb} and equation (\ref{invar3}), whenever $i>n$ we have diagonal entries corresponding to $(j,m)$ with $j\geq 3$, corresponding to the block $C$ are 
\begin{align}\label{jgeq3}
\frac{\partial\Phi_{j,m}}{\partial\alpha_{j,m}}(\epsilon_0,\alpha)=-2c_{1,2,j}a_{mp_j}^1a_{mr_j}^2\sin 2\alpha_{j,m} 
\end{align}
These entries are only zero whenever $\alpha_{j,m}\equiv 0\mod \pi/2$ for $j\geq 3$. The lemma is finished. 

$\mathbf{Remark:}$ It should be possible to remove the assumption (*) by using the standard perturbation series to calculate $(\phi_{j,m}^+(\epsilon_j,s,\alpha))^2$ around $\epsilon_j=0$.  If $\delta_3$ were generically of the from $p_3\delta_1+r_3\delta_2$,  then we conjecture that (\ref{awful1}) is nonzero provided we expanded the eigenfunctions to order $n$ with $n$ satisfying the relation $m\pm l=np_3$ or $m\pm l=nr_3$ for some $l$ in $\mathbb{N}$. The calculations required to do so are difficult. This conjecture is discussed further in \ref{perturb}
\end{proof}

\begin{proof}[Proof of Theorem \ref{rigidtwo}] 
This proof is very similar to the one in \cite{ert2} and is again included for completeness.  Let us start by assuming the matrix $J$ is invertible on $M(\epsilon)$ except for on an analytic set, say $U$, of $(\epsilon_3,\epsilon_4)$ Recall that on the manifold $\epsilon_{j}$ and the corresponding $\alpha_{j,1}$ for $j=1,2$ are fixed. Then given some $\tilde{\epsilon}$ with variable components $(\epsilon_3,\epsilon_4)$ in $[0,1]^2/U$, we let 
\[
F=\{\alpha: \frac{\partial\Phi}{\partial\alpha}(\tilde{\epsilon},\alpha)=0\}.
\]
Since
\[
\Phi(\tilde{\epsilon},\alpha): M(\tilde{\epsilon})\rightarrow \mathbb{R}^{N-2},
\]
the corollary follows if we can show that the set  $\Phi^{-1}(\Phi(F)^c)$ is open and dense. We know the set is open since $\Phi^{-1}$ is open, and $F$ is compact. If we assume that it is not dense, then the set contains contains an open set $O$ which also contains a point $\alpha_0$ which is not in $F$.  Because the Jacobian is nonzero, $\Phi$ is a homeomorphism on a neighborhood of $\alpha_0$, which implies $\Phi(F)$ contains an open set. The last statement contradicts Sard's theorem. Now we assume that $\Phi(\alpha_1)$ is not in $\Phi(F)$ and $\Phi^{-1}(\Phi(F))$ is infinite. Let $\alpha_2$ be an accumulation point of $\Phi^{-1}(\Phi(\alpha_1))$. Because $\Phi$ is continuous, $\Phi(\alpha_2)=\Phi(\alpha_1)$ and $\frac{\partial\Phi}{\partial\alpha_2}\neq 0.$ It follows that there is a neighborhood, $N$, of $\alpha_2$ such that $\alpha$ is in $N$ and $\Phi(\alpha)=\Phi(\alpha_2)$ implies $\alpha=\alpha_2$. This is a contradiction to our assumption so we know $\Phi^{-1}(\Phi(\alpha_1))$ is finite. Because $\Phi$ is a spectral invariant, then $\Phi^{-1}(\Phi(F)^c)$ is a subset of the manifold which satisfies the conditions of Theorem \ref{rigidtwo}. 
\end{proof}

This theorem has a nice corollary if we make the following observations:
\begin{enumerate}
\item  Any two directions $\delta_1$ and $\delta_2$ form a basis for the lattice $\mathbb{L}$, so our choice of basis and translate is arbitrary. 
\item The potentials on $M(\epsilon)$ satisfying the conditions of the theorem are dense in the set of all analytic potentials in the $C^{\infty}$ topology.
\item The set of smooth periodic potentials which are a sum of only a finite number of directional potentials each with a finite number of gaps in each direction are dense in the set of finite gap periodic potentials in the $C^{\infty}(\mathbb{R}^2/\mathbb{L})$ topology. 
\item The set of finite gap potentials is dense in the set of all $C^6(\mathbb{R}^2/\mathbb{L})$ potentials in the $C^{\infty}$ topology.
\end{enumerate}

\begin{cor} The set of analytically rigid potentials is dense in the set of smooth potentials on $\mathbb{R}^2/\mathbb{L}$ in the $C^{\infty}(\mathbb{R}^2/\mathbb{L})$ topology
\end{cor}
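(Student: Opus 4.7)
The plan is to chain the four observations above with Theorem \ref{rigidtwo} to produce, in any prescribed $C^{\infty}$-neighborhood of an arbitrary smooth potential, a potential that is analytically rigid. The overall structure is a sequence of approximations, each moving us into a smaller class of potentials, culminating in a class where Theorem \ref{rigidtwo} directly applies.

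Starting with any $q \in C^{\infty}(\mathbb{R}^2/\mathbb{L})$, I would first invoke observation (4) to find a finite-gap potential $q_1$ in a prescribed $C^{\infty}$-neighborhood $U$ of $q$. Next, observation (3) allows me to replace $q_1$ by a potential $q_2 \in U$ with only finitely many directional potentials, each with finitely many open gaps. Using observation (1), I can assume without loss of generality that two of the directional potentials of $q_2$ are associated to directions $\delta_1,\delta_2$ which form a basis of $\mathbb{S}$; the freedom in choosing the basis absorbs any labeling issue. If $q_2$ does not already possess a directional potential in the direction $\delta_3=\delta_1+\delta_2$, or if the gap counts of $q_1,q_2,q_3$ do not match, I would add or slightly enlarge directional components in finitely many directions, staying inside $U$, so that the resulting potential $q_3$ satisfies the setup (*) in Section \ref{section2}.

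At this point, $q_3$ lies in a manifold $M(\epsilon)$ of the type considered in Theorem \ref{rigidtwo}, with $(\epsilon_3,\epsilon_4)$ determined by the construction. If that pair happens to lie in the exceptional analytic subset of $[0,1]^2$ singled out by Theorem \ref{rigidtwo}, a further small perturbation of the relevant gap lengths moves it off the exceptional set while keeping us in $U$. Theorem \ref{rigidtwo} then supplies an open subset of $M(\epsilon)$ consisting of potentials isospectral to only finitely many analytic potentials. By observation (2), this open set is dense in the analytic potentials of $U$, so a final small perturbation of $q_3$ inside $M(\epsilon)$ produces an analytically rigid potential in $U$. Since $U$ was an arbitrary $C^{\infty}$-neighborhood of $q$, density follows.

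The main obstacle is the bookkeeping of compatibility between successive approximations: each step must preserve the structural hypotheses required by the next. In particular, the adjustment ensuring $\delta_3=\delta_1+\delta_2$ is one of the directional components and the alignment of gap counts in the first three directions (hypothesis (*)) has to be carried out by perturbations staying in the finite-gap class, and one must verify that the gap-length parametrization used in defining $M(\epsilon)$ can accommodate these perturbations analytically. Once these compatibility checks are in place, the density argument is a straightforward concatenation of the four observations with the conclusion of Theorem \ref{rigidtwo}.
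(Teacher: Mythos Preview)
Your proposal is correct and follows essentially the same approach as the paper: the paper does not give an explicit proof of the corollary at all, but simply lists the four observations and states the corollary, leaving the chaining you describe implicit. Your write-up is a faithful and more detailed rendering of that intended argument.
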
            

\appendix

\section{Fourier Coefficients of the $\wp$ Function}\label{fcop}

As detailed in section 1, the $\wp$-function depends on a parameter $\tau_j>0$. The complex valued function $\wp(z,\tau)$ is given by
\[
\wp(z,\tau)=\frac{1}{z^2}+\sum\limits_{(m,n)\in \mathbb{Z}^2/0}\sparen{\frac{1}{(z-n-im\tau)^2}-\frac{1}{(n+im\tau)^2}}
\]
which as before is real on the line $x+\frac{i\tau_j}{2}$ and setting,
\[
a=e^{-2\pi \tau_j}\quad b=e^{2\pi i(x+\frac{i\tau_j}{2})}
\]
gives
\[
\frac{1}{(2\pi i)^2}\wp(x,\tau)=\frac{1}{12}+\sum\limits_{n=-\infty}^{\infty}\frac{ab}{(1-a^mb)^2}-2\sum\limits_{n=1}^{\infty}\frac{na^n}{1-a^n}.
\]
Because
\[
\frac{a^mb}{(1-a^mb)^2}=\sum_{n=1}^{\infty}n(a^mb)^n \quad m \geq 0
\]
and 
\[
\frac{a^mb}{(1-a^mb)^2}=\sum_{n=1}^{\infty}n(a^{-m}b^{-1})^n \quad m< 0
\]
the representation 
\begin{align*}
&\frac{1}{(2\pi i)^2}\wp(x,\tau)
\\= &\frac{1}{12}+\sum\limits_{n=1}^{\infty}na^{\frac{n}{2}}e^{2\pi inx}+\sum\limits_{m=1}^{\infty}\sum\limits_{n=1}^{\infty}n(a^{n(m+\frac{1}{2})}e^{2\pi inx}+a^{n(m-\frac{1}{2})}e^{-2\pi ix})-2\sum\limits_{n=1}^{\infty}\frac{na^n}{1-a^n}.
\end{align*}
Changing the order of summation we get
\[
\frac{-1}{4\pi^2}\wp(x,\tau)=\frac{1}{12}+\sum\limits_{n=1}^{\infty}\frac{2na^{\frac{n}{2}}}{1-a^n}\cos(2\pi nx)-2\sum\limits_{n=1}^{\infty}\frac{na^n}{1-a^n}.
\]
Therefore the Fourier coefficients for the $\wp$ functions in the first three directions  are given by:
\begin{align}\label{fseriesforp}
&a_n^j=\frac{-8\pi^2n\exp(-\pi n\tau_j)}{1-\exp(-2\pi n\tau_j)} \qquad \mathrm{for}\,\, n\geq 1 \\
& a_0=-\frac{\pi^2}{3}+8\pi^2\sum\limits_{n=1}^{\infty}\frac{n\exp(-\pi n\tau_j)}{1-\exp(-\pi n\tau_j)}
\end{align}
where $j=1,2$. The appropriate $\tau_j$ will depend on the choice of $\epsilon_j$ as given in section 1.

\section{Calculation of the Invariants}\label{perturb}

In order to prove Lemma \ref{mainlemma} we need to show that there exist $\epsilon_1$ and $\epsilon_2$ in $[0,1]$ such that
\begin{align}
\frac{\partial^2\Phi_{1,m}}{\partial\epsilon_3\partial\alpha_{1,m}} (\epsilon_0,\alpha) \qquad \mathrm{and}\qquad \frac{\partial^2\Phi_{2,n}}{\partial\epsilon_3\partial\alpha_{2,n}} (\epsilon_0,\alpha)
\end{align}
are nonzero except perhaps on an analytic set of $\alpha$. 

We know by (\ref{invariant})
\begin{align}
\Phi_{j,m}(\epsilon,\alpha)=\int\limits_{\Gamma}\sabs{\sum\limits_{j\neq k}\frac{\delta_k}{\delta_k\cdot d_j}q_k(\epsilon,\delta_k\cdot x,\alpha)}^2(\phi_{j,m}^+(\epsilon,\delta_j\cdot x,\alpha))^2\,dx
\end{align}
Each $q_j(\epsilon,\delta_j\cdot x,\alpha)$ is independent of $\epsilon_3$ when $j\neq 3$.  Furthermore since $q_k(\epsilon,\delta_k\cdot x,\alpha)$ and $(\phi_{k,m}^+(\epsilon,\delta_j\cdot x,\alpha))^2$ are independent of $\mu_{j,m}(\epsilon,\delta_j\cdot x,\alpha)$ for all $j\neq k$, so only the function $(\phi_{j,m}^+(\epsilon,\delta_j\cdot x,\alpha))^2$ depends on $\alpha_{j,m}$ in the above integral. As a result we can write
\begin{align}\label{beforezero}
\frac{\partial^2\Phi_{j,m}}{\partial\epsilon_3\partial\alpha_{j,m}}(\epsilon,\alpha)=\int\limits_{\Gamma}\frac{\partial}{\partial\epsilon_3}\sabs{\sum\limits_{j\neq k}\frac{\delta_k}{\delta_k\cdot d_j}q_k(\epsilon,\delta_k\cdot x,\alpha)}^2\frac{\partial}{\partial\alpha_{j,m}}(\phi_{j,m}^+(\epsilon,\delta_j\cdot x,\alpha))^2\,dx
\end{align}

Whenever $\epsilon=\epsilon_0$, then $q_3(\epsilon_0,\delta_3\cdot x,\alpha)=0$ and the derivative $\partial_{\epsilon_3}q_3(\epsilon_0,\delta_3\cdot x,\alpha)$ can be calculated using the Fredholm alternative as in \cite{ert2}.  Following Appendix I of \cite{ert2}, we may write
\begin{align}\label{deriveq3}
\frac{\partial q_3}{\partial\epsilon_3}(\epsilon_0,\delta_3\cdot x,\alpha)=\sum\limits_{n\in I_3}\gamma_{3,n}\cos(2\pi \delta_3\cdot x+2\alpha_{3,n}).
\end{align}
Also from the derivation of equation (\ref{eigenplus}), we can conclude that 
\begin{align}\label{eigenorder}
(\phi_{j,m}^+(\epsilon_0,s,\alpha))^2=2\cos^2(\pi m(\delta_j\cdot x)+\alpha_{j,m})+\mathcal{O}(\epsilon_j) 
\end{align} 
where by Lemma \ref{lemmaone} the order terms are bounded by $\epsilon_jC$ where $C$ depends only on $\alpha_{j,m}$.
 Hence from analytic perturbation theory and the derivation of (\ref{eigenplus}) we can use (\ref{eigenorder}) to conclude that 
 \begin{align}
 \frac{\partial(\phi_{j,m}^+)^2}{\partial\alpha_{j,m}}(\epsilon_0,\delta_j\cdot x,\alpha)=-2\sin(2\pi (\delta_j\cdot x)m+2\alpha_{j,m})+\mathcal{O}(\epsilon_j)
 \end{align}
 where the  $\mathcal{O}(\epsilon_j)$ terms are bounded by $\epsilon_jC$ with $C$ a constant depending only on the coordinate $\alpha_{j,m}$.
Because any two directions $\delta_1$ and $\delta_2$ in $\mathbb{S}$ form a basis, we know that there exists a nonzero pair of integers $(p_l,r_l)$ such that for any third vector $\delta_l \neq \delta_1, \delta_2$ we have $\delta_l= p_l\delta_1+r_l\delta_2$.  For easier computations we make  the initial variable change $\delta_1\cdot x=s$ and $\delta_2\cdot x=t$, with the associated Jacobian, Vol$(\Gamma)$, and rewrite the invariants. 
We also let $D$ denote a generic constant which is independent of the coordinates, and we let
\begin{align}
c_{l,k,j}=\frac{\delta_l\cdot\delta_j}{2(\delta_l\cdot d_j)(\delta_k\cdot d_j)}(\mathrm{Vol}(\Gamma)).
\end{align} 

From statements (1-3) in Section 3, (\ref{deriveq3}), (\ref{eigenorder}) and (\ref{beforezero}), when $\epsilon=\epsilon_0$, we have
\begin{align}\label{awful1}
 &(c_{3,l,j}\mathrm{Vol}(\Gamma))^{-1}\frac{\partial^2\Phi_{j,m}}{\partial\epsilon_3\partial\alpha_{j,m}}(\epsilon_0,\alpha)=\\&
 \\& 4\nonumber\int\limits_0^1\int\limits_0^1\sparen{\sum\limits_{n\in I_3}\gamma_{3,n}\cos(2\pi n(s+t)+2\alpha_{3,n})}\wp_l(t+i\frac{\tau_l}{2},\tau_l))\frac{\partial(\phi_{j,m}^+)^2}{\partial\alpha_{j,m}}(\epsilon_0,s,\alpha))^2\,ds \,dt=\\& \nonumber
2 \sum\limits_{n\in I_3}\gamma_{3,n}a_n^l\int\limits_0^1\cos(2\pi ns+2\alpha_{3,n})\frac{\partial(\phi_{j,m}^+)^2}{\partial\alpha_{j,m}}(\epsilon_0,s,\alpha)\,ds
\end{align}
where $0\leq j,l\leq 2, j\neq l$.

When $j=1$, by the hypothesis (*) on the number of open gaps that $q_3$ has, the right hand side of (\ref{awful1}) is just
\begin{align}
2a_m^2\gamma_{3,m}\sin(2\alpha_{3,m}-2\alpha_{1,m})+\mathcal{O}(\epsilon_1)
\end{align}
Here the $\mathcal{O}(\epsilon_1)$ terms are bounded by $\epsilon_1C$ where the constant depends only on $\alpha_{1,m}$ and $\alpha_{3,n}$ for all $n\in I_3$.  We recall that $a_n^l\rightarrow 0$ as $\epsilon_l\rightarrow 0$ for all $n$ in $\mathbb{N}$ and $l=1,2$ since $a_n^l$ is related to $\epsilon_l$ by Equation (\ref{gapone}) and (\ref{fseriesforp}) However, we can make the constant  uniform in  $\epsilon_2$.  If we let
\begin{align}\label{Mm}
\sup_{s\in[0,1]} \sabs{\frac{\partial(\phi_{1,m}^+)^2}{\partial\alpha_{1,m}}(\epsilon_0,s,\alpha)}=M_m<\infty
\end{align}
then this follows from the rough estimate
\begin{align}
&|\sum\limits_{n\in I_3}\int\limits_0^1\int\limits_0^1\sparen{\sum\limits_{n\in I_3}\gamma_{3,n}\cos(2\pi n(s+t)+2\alpha_{3,n})}p_2(t+i\frac{\tau_2}{2},\tau_2))\\& \times \nonumber \sparen{\frac{\partial(\phi_{1,m}^+)^2}{\partial\alpha_{1,m}}(\epsilon_0,s,\alpha))^2-\sin(2\pi ms+2\alpha_{1,m})} \,ds \,dt| \leq \\& \nonumber \sum\limits_{n\in I_3}\gamma_{3,n}a_n^2\cos(2\alpha_{3,n})\sparen{M_m+2}\leq  2n\sparen{M_m+2}
\end{align}
since the gap lengths $\gamma_{3,n}$ and the Fourier coefficients $a_n^2$ are exponentially decreasing.  Now let $\beta$ in $(0,1)$ be a small fixed parameter. We consider the set of $\alpha$ such that 
\begin{align}
|2\alpha_{3,m}-2\alpha_{1,m}-k\pi|\geq \beta  \qquad \forall k\in \mathbb{Z}, \, m\in I_1
\end{align}
We let this set be denoted as $A_1$, and note that its complement is an analytic set. 
Therefore provided we chose $\epsilon_1$ and $\epsilon_2$ which satisfy the inequality 
\begin{align}\label{choiceone}
(M_m+2)\epsilon_1<\frac{|a_m^2|\gamma_{3,m}}{2n}\sin(\beta)
\end{align}
for all $m$ in $I_1$ and $\alpha$ in $A_1$ then (\ref{awful1}) is nonzero for $j=1$ and all $m$ in $I_1$. The tricky step is to prove that we can pick $\epsilon_1,\epsilon_2$ in $(0,1)$ such that \ref{choiceone} holds for all $m$ in $I_1$ but also so 
\begin{align}
\frac{\partial^2\Phi_{2,n}}{\partial\epsilon_3\partial\alpha_{2,n}}(\epsilon_0,\alpha)\neq 0
\end{align}
for all $n$ in $I_2$ except on an analytic set of $\alpha$.

Because for small $\epsilon_1$, $a^1_{n_1}>a^1_{n_2}$ whenever $n_2>n_1$ the right hand side of (\ref{awful1}) is already written in ascending order in $\epsilon_1$ for $j=2,l=1$. Let
\begin{align}
b_{j,m,n}(\epsilon_0,\alpha)=\int\limits_0^1\cos(2\pi ns+2\alpha_{3,n})\frac{\partial(\phi_{j,m}^+)^2}{\partial\alpha_{j,m}}(\epsilon_0,s,\alpha)\,ds.
\end{align}
Since we do not know if  $b_{2,m,n}(\epsilon_0,\alpha)\equiv 0$ in $\alpha$ for all $m\neq n$, we pick $\epsilon_1$ as follows. Say $b_{2,m,1}(\epsilon_0,\alpha)$ is nonzero except on an analytic set of $\alpha$, and then let the set where $b_{2,m,1}(\epsilon_0,\alpha)=0$ be denoted as $A_{2,m,1}^c$. If we can prove that for $j=2,l=1$, (\ref{awful1}) is nonzero for some $\alpha$, then it will be nonzero on some open dense set of $\alpha$'s. The easiest $\alpha$ to select is the one when $b_{2,m,1}(\epsilon_0,\alpha)$ is at its maximum. Hence we then pick $\epsilon_1$ such that 
\begin{align}\label{maxone}
\max_{\alpha\in A_{2,m,1}}|\gamma_{3,1}a_1^1 b_{2,m,1}(\epsilon_0,\alpha)|\geq \sabs{ \sum\limits_{\substack{k\in I_3\\k\neq 1}}\gamma_{3,k}a_k^1b_{2,m,k}(\epsilon_0,\alpha)}
\end{align}
where the $\max$ is taken over the possible values of $b_{2,m,1}(\epsilon_0,\alpha)$ with $\alpha$ in $A_{2,m,1}$, and we consider the right hand side of (\ref{maxone}) to be evaluated at this $\alpha$ as well. 
If $b_{2,m,1}(\epsilon_0,\alpha)\equiv 0$ in $\alpha$, but $b_{2,m,2}(\epsilon_0,\alpha)$  is nonzero except on an analytic set of $\alpha_{2,m}$, and let the set where $b_{2,m,2}(\epsilon_0,\alpha)=0$ be denoted as $A_{2,m,2}^c$
 then pick $\epsilon_1$ such that 
\begin{align}
\max_{\alpha\in A_{2,m,2}}|\gamma_{3,2}a_2^1b_{2,m,2}(\epsilon_0,\alpha)|\geq \sabs{ \sum\limits_{\substack{k\in I_3\\k>2}}\gamma_{3,k}a_k^1b_{2,m,k}(\epsilon_0,\alpha)}
\end{align}
where again the $\max$ is taken over the possible values of $b_{2,m,2}(\epsilon_0,\alpha)$ with $\alpha$ in $A_{2,m,2}$. 
We continue this process inductively.  As before, let $\beta$ be a small parameter in $(0,1)$. We now also consider the set of $\alpha$ such that 
\begin{align}
|2\alpha_{3,m}-2\alpha_{2,m}-k\pi|\geq \beta  \qquad \forall k\in \mathbb{Z}, \, m\in I_2
\end{align}
and let this set be denoted by $A_{2,m,m}$. 
We know 
\begin{align}
b_{2,m,m}(\epsilon_0,\alpha)=\sin(2\alpha_{3,m}-2\alpha_{2,m})+\mathcal{O}(\epsilon_2)
\end{align}
where the $\mathcal{O}(\epsilon_2)$ terms are bounded by $\epsilon_2C$ where  is a constant depending only on $\alpha_{2,m}$ and $\alpha_{3,n}$ for all $n$ in $I_3$.  Hence our selection process terminates because $b_{2,m,m}(\epsilon_0,\alpha)$ is not zero for $\alpha$ in $A_{2,m,m}$ provided we chose $\epsilon_2$ such that 
\begin{align}\label{choicetwo}
\epsilon_2|C|<\sin(\beta)
\end{align}
Hence we pick $\epsilon_1$ in terms of $\epsilon_2$ so that 
\begin{align}\label{choice}
\min_{n}\max_{\alpha\in A_{2,m,n}}\sparen{|\gamma_{3,n}a_n^1b_{2,m,n}(\epsilon_0,\alpha)|}\geq \sabs{\sum\limits_{\substack{l\in I_3\\k> n}}\gamma_{3,k}a_k^1b_{2,m,k}(\epsilon_0,\alpha)}
\end{align}
for all $m$ in $I_2$ where the $\min_{n}$ is taken over those indices $n$ for which $b_{2,m,n}(\epsilon_0,\alpha)$ is not identically zero in $\alpha$. This choice of $\epsilon_1$ and $\epsilon_2$ is not in contradiction to our choice of $\epsilon_1$ small compared to $\epsilon_2$ since the right hand side of the inequality (\ref{choice}) always has a higher order function of $\epsilon_1$ than the left hand side. Furthermore $b_{2,m,n}=0$ for all $m\neq n$ whenever $\epsilon_2=0$, so the right hand side is bounded. We conjecture using a computer and the standard perturbation series for $b_{j,m,n}(\epsilon_0,\alpha)$ that the assumption $q_1$, $q_2$ and $q_3$ have the same number of gaps could be removed. However, this is computationally difficult since it has been verified $b_{j,m,n}(\epsilon_0,\alpha)$ is $\mathcal{O}(\epsilon_j^{|m-n|})$ for all $m$ up to some sufficiently large values of $m$ and $n$.  

For the case with $j\geq 3$, the invariants are computed almost exactly the same way as in \cite{ert2} because the form of the invariants coincides for these indices. In this case we have that 
\begin{align}\label{invar3}
\Phi_{j,m}(\epsilon_0,\alpha)=c_{1,2,j}a^1_{mp_j}a^2_{mr_j}\cos(2\alpha_{j,m})+D
\end{align}

\section{References}
\bibliography{Aldenbibiso}

\begin{thebibliography}{10}

\bibitem{its}
V.~Matveev A.~Its.
\newblock A class of solutions of the korteg-de vries equation.
\newblock {\em Problems in Math Physics}, 79(9), 1976.

\bibitem{kaihua}
K.~Cai.
\newblock Dispersion for {S}chr\"odinger operators with one-gap periodic
  potentials on {$\Bbb R^1$}.
\newblock {\em Dyn. Partial Differ. Equ.}, 3(1):71--92, 2006.

\bibitem{ert1}
G.~Eskin, J.~Ralston, and E.~Trubowitz.
\newblock On isospectral periodic potentials in {${\bf R}^n$}.
\newblock {\em Comm. Pure Appl. Math.}, 37(6):715--753, 1984.

\bibitem{ert2}
G.~Eskin, J.~Ralston, and E.~Trubowitz.
\newblock On isospectral periodic potentials in {${\bf R}^{n}$}. ii.
\newblock {\em Comm. Pure Appl. Math.}, 37(5):647--676, 1984.

\bibitem{garnett}
J.~Garnett and E.~Trubowitz.
\newblock Gaps and bands of one-dimensional periodic {S}chr\"odinger operators.
\newblock {\em Comment. Math. Helv.}, 59(2):258--312, 1984.

\bibitem{kg1}
C.~Gordon and T.~Kappeler.
\newblock On isospectral potentials on tori.
\newblock {\em Duke Math. Journal}, 63(2):217--233, 1991.

\bibitem{kg2}
C.~Gordon and T.~Kappeler.
\newblock On isospectral potentials on flat tori ii.
\newblock {\em Comm. Partial Differential Equations}, 20(3-4):709--728, 1995.

\bibitem{hochstadt}
Harry Hochstadt.
\newblock On the determination of a {H}ill's equation from its spectrum.
\newblock {\em Arch. Rational Mech. Anal.}, 19:353--362, 1965.

\bibitem{birk}
T.~Kappeler and M.~Makarov.
\newblock On birkhoff coordinates for kdv.
\newblock {\em Ann. H. Poincare}, 2:806--856, 2001.

\bibitem{k1}
E.~Korotyaev.
\newblock Estimates for the hill operator.
\newblock {\em Journal of Differential Equations}, 162(1):1--26, 2000.

\bibitem{k2}
E.~Korotyaev.
\newblock Estimates for the hill operator, ii.
\newblock {\em J. Diff. Eq}, 223:229--260, 2006.

\bibitem{winkler}
Wilhelm Magnus and Stanley Winkler.
\newblock {\em Hill's equation}.
\newblock Dover Publications Inc., New York, 1979.
\newblock Corrected reprint of the 1966 edition.

\bibitem{mckean}
H.~P. McKean and E.~Trubowitz.
\newblock Hill's operator and hyperelliptic function theory in the presence of
  infinitely many branch points.
\newblock {\em Comm. Pure Appl. Math.}, 29(2):143--226, 1976.

\bibitem{PT}
J.~Poschel and E.~Trubowitz.
\newblock {\em Inverse Spectral theory}.
\newblock Academic Press [Harcourt Brace Jovanovich Publishers], 1987.

\bibitem{complex}
Elias~M. Stein and Rami Shakarchi.
\newblock {\em Complex analysis}.
\newblock Princeton Lectures in Analysis, II. Princeton University Press,
  Princeton, NJ, 2003.

\bibitem{trubowitz}
E.~Trubowitz.
\newblock The inverse problem for periodic potentials.
\newblock {\em Comm. Pure Appl. Math.}, 30(3):321--337, 1977.

\end{thebibliography}
\end{document}